\newcommand{\nc}{\newcommand}
 \nc{\cl}{\centerline}
 \nc{\rk}{{\rm rk}}
  \nc{\Ker}{{\rm Ker }}
    \nc{\Tab}{{\rm Tab }}
    \nc{\Stan}{{\rm Stan}}
        \nc{\AStan}{{\rm AStan}}
 \nc{\g}{{\frak g}}
 \nc{\SL}{{\rm SL}}
 \nc{\lub}{{\rm lub}}
 \nc{\sh}{{\rm sh}}
 \nc{\hatQ}{{\hat Q}}
 \nc{\sgn}{{\rm sgn}}
 \nc{\ad}{{\rm ad}}
 \nc{\Cent}{{\rm Cent}}
 \nc{\len}{{\rm len}}
  \nc{\idx}{{\rm index}}
 \nc{\Mat}{{\rm Mat}}
  \nc{\Ann}{{\rm Ann}}
  \nc{\A}{{\mathcal A}}
    \nc{\B}{{\mathcal B}}
      \nc{\C}{{\mathcal C}}
           \nc{\DD}{{\mathcal D}}
 \nc{\Loewy}{{\rm Loewy}}
 \nc{\Supp}{{\rm Supp}}
 \nc{\env}{{\rm env}}
 \nc{\wt}{{\rm wt}}
 \nc{\poll}{{\rm \l}}
 \nc{\efp}{{\Bbb F}_p}
 \newcommand{\Par}{{\rm Par}}
\nc{\baru}{{\overline u}}
\nc{\baralpha}{{\overline \alpha}}
\nc{\bargamma}{{\overline \gamma}}
\nc{\barA}{{\bar A}}
\nc{\barq}{{\overline q}}
 \nc{\Orb}{{\rm Orb}}
 \nc{\hatsigma}{{\hat \sigma}}
 \nc{\hatpi}{{\hat \pi}}
  \nc{\hatzeta}{{\hat \zeta}}
 \nc{\Ocal}{{\mathcal O}}
 \nc{\M}{\mathfrak{m}}
 \nc{\seee}{\mathbb C}
 \nc{\varleq}{\preccurlyeq}
 \nc{\hatlambda}{{\hat\lambda}}
 \nc{\hatphi}{{\hat \phi}}
 \nc{\daggerlambda}{{\lambda^\dagger}}
    \nc{\barr}{{\bar r}}
  \nc{\bart}{{\bar t}}
    \nc{\barsigma}{{\bar \sigma}}
  \newcommand{\resp}{{resp.\,}}
\nc\diag{{\rm diag}}
\renewcommand{\vert}{{\,|\,}}
\nc{\hatL}{{\hat L}}
\nc{\barE}{{\bar   E}}
\nc{\D}{{\mathcal D}}
\nc{\E}{{\mathcal E}}
\nc{\F}{{\mathcal F}}
\nc{\FF}{{\mathcal F}}
\nc{\I}{{\mathcal I}}
\nc{\even}{{\rm e}}
\nc{\ep}{\epsilon}
\nc{\odd}{{\rm o}}
\nc{\Coker}{{\rm Coker}}
\nc{\olE}{{\overline E}}
\nc{\indBG}{{\rm ind}_B^G\,}
\nc{\indHG}{{\rm ind}_H^G\,}
\nc{\que}{{\mathbb Q}}
\nc{\barlambda}{{\bar\lambda}}
\nc{\barmu}{{\bar\mu}}
\nc{\barnu}{{\bar\nu}}
\nc{\bartau}{{\bar\tau}}
\nc{\barm}{{\overline  m}}
\nc{\divind}{{\rm div.ind}}
\nc{\tl}{{\tilde{\lambda}}}
\nc{\dar}{\downarrow}
\nc{\en}{\mathbb N}
\nc{\eno}{{\mathbb N}_0}
\nc{\Sym}{{\rm Sym}}
\nc{\Symm}{{\rm Sym}}
\newcommand{\q}{\quad}
\newcommand{\de}{\delta}
\newcommand{\Mod}{{\rm Mod}}
\renewcommand{\mod}{{\rm mod}}
\newcommand{\Sp}{{\rm Sp}}
\newcommand{\bs}{\bigskip}
\renewcommand{\vert}{\,|\,}
\renewcommand{\sgn}{{\rm sgn}}
\renewcommand{\vert}{\,|\,}
 \newcommand{\tbw}{\textstyle\bigwedge}
\newcommand{\zed}{{\mathbb Z}}
\newcommand{\End}{{\rm End}}
\newcommand{\Hom}{{\rm Hom}}
\newcommand{\cf}{{\rm cf}}
\renewcommand{\mod}{{\rm mod}}
\newcommand{\GL}{{\rm GL}}
\nc{\barB}{{\overline B}}
\nc{\barb}{{\overline b}}
\renewcommand{\mod}{{\rm{mod}}}
\nc{\geom}{{\rm geom}}
\nc{\rep}{{\rm rep}}
\newtheorem{definition}{Definition}[section]
\newtheorem{proposition}[definition]{Proposition}
\newtheorem{theorem}[definition]{Theorem}
\newtheorem{lemma}[definition]{Lemma}
\newtheorem{corollary}[definition]{Corollary}
\newtheorem{example}[definition]{Example}
\newtheorem{remark}[definition]{Remark}
\begin{document}


\centerline{\bf  Greene's Theorem and ideals of the group algebra }

\centerline{\bf of a symmetric group}

\bigskip

\centerline{Stephen Donkin}

\bigskip

{\it Department of Mathematics, University of York, York YO10 5DD}

\medskip

\centerline{\tt stephen.donkin@york.ac.uk}

\bigskip

\centerline{2 December     2024}

\bs\bs

\centerline{\bf Abstract}

\q We show that certain factor rings of the group algebra of a symmetric group have natural bases of group elements. These include the factor rings studied by  Raghavan, Samuel and Subrahmanyam, \cite{RSS} and by Doty, \cite{DoDS}. 
We also give generators for the annihilator of certain permutation modules for symmetric groups.

\bs\bs
\section{Introduction}

\q The problem of describing  explicit bases of certain factor rings of the group algebra of a symmetric group has been considered in \cite{RSS},  \cite{DoDS}, \cite{BDMCanBases}.  

\q For a positive integer $n$ we write $[1,n]$ for the set $\{1,\ldots,n\}$ and $\Sym(n)$  the group of permutations of $[1,n]$.  For positive integers $n,r$ we have the set $I(n,r)$ of mappings from $[1,r]$ to $[1,n]$.  Then $\Sym(n)$ and $\Sym(r)$ act on $I(n,r)$ by composition of mappings. Thus $\sigma i= i\circ \sigma$ and $\tau i= i\circ \tau ^{-1}$, for $\sigma\in \Sym(n)$, $\tau\in \Sym(r)$, $i\in I(n,r)$.  We fix a commutative ring $R$.  Then  the $R$-module with $R$-basis $I(n,r)$ is  an  $R\Sym(n)$ permutation module and an  $R\Sym(r)$ permutation module.

\q  It was shown by Raghavan, Samuel and Subrahmanyam, \cite[Theorem 1]{RSS}, that the group algebra $R \Sym(r)$  modulo the annihilator  of the action has  an $R$-basis consisting of the permutations that  contain no decreasing sequence of length more that $n$.   Similarly it is shown \cite[Theorem 1]{DoDS} that $R\Sym(n)$ modulo the annihilator of the action has an $R$-basis consisting of those permutations that  contain an increasing sequence of length at least $n-r$.

\q These results  are  closely related to the theorem of Schensted describing the number of rows  of the shape of the Robinson-Schensted correspondent of a  permutation in terms of increasing or decreasing sequences, \cite[Theorem 2]{Schensted}.  This was generalised  in the theorem of C. Greene, \cite[Section 1, Theorem]{Greene}, to give   a similar description of the entire shape of the Robinson-Schensted correspondent of an arbitrary permutation.  Our purpose here is give a general   result on bases of certain  factor rings of the group algebra of a symmetric group  corresponding to  Greene's description of the shape of the Robinson-Schensted correspondent of an arbitrary permutation, from which the cited theorems of  Raghavan, Samuel and Subrahmanyam and of Doty may be obtained as special cases. 

\q The earlier results rely on the cell structure on Hecke algebras of type $A$ due to Kazhdan and Lusztig, \cite{KL}, and the structure due to Murphy,  \cite{Murphy} (and the relationship between these structures as described by Geck, \cite{Geck}).  
  Our arguments are elementary.
We give these in Section 2.  

\q Section 3 is less elementary and using, in particular, results from \cite{DonkinPermMods}, we give a description of the annihilator of certain permutation modules for symmetric groups   in a more general setting.
\bs\bs\bs

\section{Ideals and bases}

\q We fix a positive integer $n$.  We write  $\Par(n)$ for the set of partitions of $n$ and $S(n)$ for  the set of sequences of length $n$  of distinct elements of $\{1,\ldots,n\}$.   We shall also write a sequence $(s_1,\ldots, s_n)$ in $S(n)$ as the  word $s_1\ldots s_n$.       We write $\Sym(X)$ for the group of permutations of a finite set $X$.  If $[1,n]$ is a the disjoint union of non-empty subsets $X_1,\ldots,X_m$ then we regard $\Sym(X_1)\times \cdots \times \Sym(X_m)$ as a subgroup of $\Sym(n)$ in the natural way. For $s=s_1\ldots s_n\in S(n)$ we write $\pi(s)$ for the corresponding element of $\Sym(n)$, written in two line notation as 
$$\left( \begin{matrix}1 & 2 & \cdot  &\cdot & \cdot & n\\
s_1 & s_2 &\cdot &\cdot &\cdot & s_n
\end{matrix} \right).$$

\q We have an action of  $\Sym(n)$ on $S(n)$. For $s\in S(n)$, $\sigma\in \Sym(n)$, we have $(s \sigma)_i=s_{\sigma(i)}$, $1\leq i\leq n$. 
The  action is simply transitive.  Putting $\alpha=12\ldots n\in S(n)$, we have $\alpha \pi(s)=s$, for $s\in S(n)$.

\q To $s\in S(n)$ there corresponds, via the Robinson-Schensted correspondence,  a pair $(S,T)$ of standard $\lambda$-tableaux with entries in $[1,n]$ for some $\lambda\in \Par(n)$. We call $\lambda$ the \emph{shape} of $s$ and denote it $\sh(s)$. 

\q We say that $s$ is \emph{ascending}  or \emph{increasing}   on a subset $X$ of $[1,n]$ if for all $i,j\in X$ with $i<j$ we have $s_i<s_j$. 

\q We use the dominance partial order $\trianglelefteq$  on $\Par(n)$ and the lexicographic (total) order on $S(n)$.  We shall write $\lub(\Gamma)$ for the least upper bound of a subset $\Gamma$ of $\Par(n)$.

\begin{lemma} Suppose $s\in S(n)$ and $[1,n]$ is the disjoint union of non-empty subsets $X_1,\ldots,X_m$, with $s$ increasing  on $X_r$, $1\leq r\leq m$. Then we have  $s\sigma > s$  for all $1\neq \sigma \in \Sym(X_1)\times \cdots \times \Sym(X_m)$. 
\end{lemma}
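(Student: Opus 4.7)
The plan is straightforward: compare $s\sigma$ with $s$ lexicographically by locating the first coordinate at which they can possibly differ, and show that at this coordinate the value increases.

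First I would set $i$ to be the smallest index in $[1,n]$ with $\sigma(i)\neq i$; such $i$ exists because $\sigma\neq 1$. By minimality, $\sigma(j)=j$ for all $j<i$, so $(s\sigma)_j = s_{\sigma(j)} = s_j$ for $j<i$. Thus $s\sigma$ and $s$ agree in the first $i-1$ positions, and it suffices to prove $(s\sigma)_i > s_i$, i.e.\ $s_{\sigma(i)} > s_i$.

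The key intermediate step is to check that $\sigma(i) > i$ and that $i,\sigma(i)$ lie in the same block $X_r$. Let $r$ be the index with $i\in X_r$; since $\sigma\in \Sym(X_1)\times\cdots\times\Sym(X_m)$ preserves the block decomposition, $\sigma(i)\in X_r$. For any $j<i$, either $j\in X_r$, in which case $\sigma(j)=j$ by minimality of $i$ and so $\sigma(i)\neq j$ (as $\sigma$ is injective), or $j\notin X_r$, in which case $\sigma(i)\in X_r$ forces $\sigma(i)\neq j$. Combined with $\sigma(i)\neq i$, we obtain $\sigma(i)>i$.

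Finally, since $s$ is increasing on $X_r$ and $i<\sigma(i)$ with both in $X_r$, we get $s_i<s_{\sigma(i)}=(s\sigma)_i$, so $s\sigma>s$ lexicographically. I expect no serious obstacle here; the only point requiring a little care is the argument that $\sigma(i)>i$, which uses both the minimality of $i$ and the fact that $\sigma$ stabilises each $X_r$ setwise.
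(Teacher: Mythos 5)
Your proof is correct and is essentially the paper's argument run forwards rather than by contradiction: the paper assumes $s\sigma<s$, takes the minimal $k$ with $(s\sigma)_k\neq s_k$, and uses block-preservation together with the monotonicity of $s$ on the blocks to force $\sigma(k)=k$, a contradiction. Your direct version, based on the minimal $i$ with $\sigma(i)\neq i$, has the small advantage of yielding the strict inequality $s\sigma>s$ outright, whereas the contradiction argument also needs the earlier observation that the action is simply transitive in order to exclude $s\sigma=s$.
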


\begin{proof}  Suppose for a contradiction that $s\sigma <  s$ and let $k$ be minimal such that $(s\sigma)_k\neq s_k$, so that $(s\sigma)_k<  s_k$, i.e., $s_{\sigma(k)} < s_k$.  Now $k\in X_r$, for some $1\leq r\leq m$ and $s$ is increasing  on $X_r$ so that $\sigma(k)< k$. But then $s_{\sigma(k)}=s_{\sigma(\sigma(k))}$ (by the minimality of $k$) so that $\sigma(k)=\sigma(\sigma(k))$ and $k=\sigma(k)$. But $s_{\sigma(k)}< s_k$, so this is impossible.
\end{proof}



\q  A partition  $\mu=(\mu_1,\ldots,\mu_m)\in \Par(n)$ will be said to be \emph{upwardly}   compatible with $s\in S(n)$ if  
$[1,n]$ may be expressed as a disjoint union of non-empty subsets $X_1,\ldots, X_m$ of sizes $\mu_1,\ldots,\mu_m$ such that $s$ is ascending    on $X_r$, for $1\leq r\leq m$. 

\q For $s\in S(n)$ we write $A(s)$  for the set of all $\mu\in \Par(n)$ upwardly   compatible  with $s$.

\q The theorem of Greene, \cite[Section 1, Theorem]{Greene},  is then:

\begin{theorem} (C. Greene) For  $s\in S(n)$  we have $\sh(s)=\lub(A(s)).$
\end{theorem}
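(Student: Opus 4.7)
The plan is to deduce the stated theorem from the classical form of Greene's theorem, namely the identity
\[
u_k(s) := \max\bigl\{ |Y_1 \cup \cdots \cup Y_k| : Y_1, \ldots, Y_k \text{ are pairwise disjoint subsets of } [1,n] \text{ on each of which } s \text{ is increasing}\bigr\} = \lambda_1 + \cdots + \lambda_k,
\]
where $\lambda = \sh(s)$. Granted this identity the lattice statement follows quickly: for any $\mu \in A(s)$ with associated decomposition $[1,n] = X_1 \sqcup \cdots \sqcup X_m$, the $k$ largest blocks show that $\mu_1 + \cdots + \mu_k \leq u_k(s) = \lambda_1 + \cdots + \lambda_k$, so $\mu \trianglelefteq \lambda$ and $\lambda$ is an upper bound for $A(s)$. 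Conversely, given $k$ disjoint $s$-increasing subsets realising $u_k(s) = \lambda_1 + \cdots + \lambda_k$, one pads with singletons on the uncovered positions to produce some $\mu \in A(s)$ with $\mu_1 + \cdots + \mu_k \geq \lambda_1 + \cdots + \lambda_k$; hence any upper bound $\nu$ of $A(s)$ satisfies $\nu_1 + \cdots + \nu_k \geq \lambda_1 + \cdots + \lambda_k$ for every $k$, i.e.\ $\lambda \trianglelefteq \nu$. Therefore $\lambda = \lub(A(s))$.

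For the upper bound $u_k(s) \leq \lambda_1 + \cdots + \lambda_k$ I would argue by induction on $n$. With $s' = s_1 \ldots s_{n-1}$ and $\lambda' = \sh(s')$, Robinson--Schensted insertion of $s_n$ adds exactly one cell to $\lambda'$, in some row $r$, so $\lambda = \lambda' + e_r$. Given a family $Y_1, \ldots, Y_k$ of disjoint $s$-increasing subsets: if no $Y_j$ contains position $n$, the inductive hypothesis applied to $s'$ bounds the total size by $\lambda'_1 + \cdots + \lambda'_k \leq \lambda_1 + \cdots + \lambda_k$; if $n$ lies in some $Y_{j_0}$, deleting it gives a family for $s'$ of total size one less, bounded inductively by $\lambda'_1 + \cdots + \lambda'_k$. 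When $r \leq k$ the extra $+1$ is absorbed by $\lambda_r = \lambda'_r + 1$. The delicate case is $r > k$: here the naive estimate is off by one, and one must show directly that $u_k(s) = u_k(s')$ using a swap argument that exploits the RSK bumping chain of $s_n$ and Schensted's description of the top rows of the insertion tableau.

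For the lower bound $u_k(s) \geq \lambda_1 + \cdots + \lambda_k$ one needs to exhibit $k$ disjoint $s$-increasing subsequences of total size $\lambda_1 + \cdots + \lambda_k$. The cleanest explicit construction is via Viennot's shadow-line description of RSK, which decomposes $[1,n]$ into $s$-increasing subsequences whose sizes are exactly $\lambda_1, \lambda_2, \ldots$; this simultaneously shows $\lambda \in A(s)$, so that $\lambda$ is in fact the maximum of $A(s)$. The main obstacle is the $r > k$ case of the upper bound together with this explicit construction for the lower bound: both rest on nontrivial combinatorial features of the RSK correspondence and have no completely formal proof from just the statement. An attractive alternative is to invoke the Greene--Kleitman chain decomposition theorem for the poset $([1,n], \preceq_s)$ with $i \preceq_s j$ iff $i \leq j$ and $s_i \leq s_j$, which yields $\lambda \in A(s)$ and the upper bound in one stroke.
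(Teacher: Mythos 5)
The paper does not actually prove this statement: it is quoted directly from Greene's 1974 article (\cite[Section 1, Theorem]{Greene}), so there is no internal proof to compare against. Your reduction of the least-upper-bound formulation to the classical identity $u_k(s)=\lambda_1+\cdots+\lambda_k$ is sound in both directions, including the padding-with-singletons step needed to show that every upper bound $\nu$ of $A(s)$ dominates $\lambda$. The difficulty is entirely in your proof of that identity.

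The concrete error is in your lower-bound construction. You claim that Viennot's shadow lines decompose $[1,n]$ into $s$-increasing subsequences of sizes exactly $\lambda_1,\lambda_2,\ldots$, ``so that $\lambda$ is in fact the maximum of $A(s)$''. This is false, and the paper's own Example 2.3 is a counterexample: for $s=412563$ one has $\sh(s)=(4,2)$ but $(4,2)\notin A(s)$ (the unique increasing subsequence of length $4$ is $1,2,5,6$, and the two leftover values $4,3$ form a decreasing pair). Greene's theorem asserts only that some $k$ pairwise disjoint increasing subsequences have \emph{total} size $\lambda_1+\cdots+\lambda_k$; it does not assert that their individual sizes can be taken to be $\lambda_1,\ldots,\lambda_k$, and indeed $\sh(s)$ need not lie in $A(s)$ --- which is exactly why the theorem is stated with $\lub$ rather than $\max$. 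Separately, the $r>k$ case of your inductive upper bound, where you need $u_k(s)=u_k(s')$, is the hard content of the theorem and you defer it to an unspecified ``swap argument''; together with the broken lower bound this means the essential content remains unproved. A repairable route is Greene's original one: show that $u_k$ is invariant under Knuth (elementary) transformations and verify the identity directly when $s$ is the row word of a standard tableau; the Greene--Kleitman chain/antichain duality you mention would also give both bounds, but then the burden shifts to proving that theorem.
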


\begin{example} Let $s=412563$. Then $A(s)$ contains $(4,1,1)$ and $(3,3)$ but not $(4,2)$. Thus $\sh(s)=(4,2)$, and $\sh(s)\not \in A(s)$. 
\end{example}

\q For $s=s_1\ldots s_n\in S(n)$ we write  $s^*$ for the reverse sequence $s_n\ldots s_1$.    For   $\lambda \in \Par(n)$ we write $\lambda^*$  for the dual (or transpose) partition.  For a subset $\Gamma$ of $\Par(n)$ we write $\Gamma^*$ for $\{\lambda^* \vert  \lambda\in \Gamma\}$. 

\q We have $\sh(s^*)=\sh(s)^*$, for $s\in S(n)$, by a result of Schensted,  \cite[Lemma 7]{Schensted}.

\q For $\Gamma\subseteq \Par(n)$ we define 
$$G'(\Gamma)=\{s \vert s\in S(n),  \sh(s)\in \Gamma\}$$
and
$$G(\Gamma)=\{\pi(s) \vert s\in G'(\Gamma) \}.$$

\q For $\mu=(\mu_1,\ldots,\mu_m)\in \Par(n)$ we will write $\Sym(\mu)$ for $\Sym(X_1)\times \cdots \times \Sym(X_m)$, where $X_1=\{1,2,\ldots,\mu_1\}$, $X_2=\{\mu_1+1,\ldots,\mu_1+\mu_2\}$ and so on.

\q  Let $R$ be a commutative ring. For a subset $H$ of $\Sym(n)$ we write $[H]$ for the element $\sum_{h\in H} h$ of $R\Sym(n)$.  Note that the ideal of $R \Sym(n)$ generated by $[\Sym(\mu)]$ is the ideal generated by all $[\Sym(X_1)\times \cdots \times  \Sym(X_m)]$, where $X_1,\ldots,X_m$ are disjoint subsets of $[1,n]$ of sizes $\mu_1,\ldots, \mu_m$.

\q For a subset $\Delta$ of $\Par(n)$ we write $I_R(\Delta)$ for the ideal of $R\Sym(n)$ generated by $[\Sym(\lambda)]$, $\lambda\in \Delta$. We just write $I(\Delta)$ for $I_\zed(\Delta)$.

\q We call a subset $\Delta$ of a partially ordered set $P$ \emph{saturated}  (\resp {\emph {co-saturated}) if whenever $\mu\in \Delta$ $\lambda \in P$ and $\lambda\leq  \mu$ (\resp $\lambda \geq   \mu$) then $\lambda\in \Delta$.  
Note that $\Delta$ is saturated if and only if the complement $P \backslash \Delta$ is co-saturated.   In this section $P$ will be $\Par(n)$ with the dominance partial order $\trianglelefteq$.

\q For $\Delta\subseteq P=\Par(n)$ we define $M(\Delta)=\zed G(P\backslash \Delta) + I(\Delta)\subseteq \zed \Sym(n)$ and  $M'(\Delta)=\zed G'(P\backslash \Delta)+ \alpha I(\Delta)\subseteq \zed S(n)$.

\begin{lemma} Let $\Delta$ be a co-saturated subset of $\Par(n)$. Suppose  $M'(\Delta)\neq \zed S(n)$ and  $s\in S(n)$ is the largest element not in $M'(\Delta)$. Then $A(s)\bigcap \Delta=\emptyset$.
\end{lemma}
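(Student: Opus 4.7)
The plan is to argue by contradiction: suppose some $\mu=(\mu_1,\ldots,\mu_m)\in A(s)\cap \Delta$ and aim to derive $s\in M'(\Delta)$, contradicting the choice of $s$. By the definition of upward compatibility, one can pick a decomposition $[1,n]=X_1\sqcup\cdots\sqcup X_m$ with $|X_r|=\mu_r$ on which $s$ is ascending; set $H=\Sym(X_1)\times\cdots\times \Sym(X_m)$.

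The heart of the argument will be the single identity
$$s\cdot [H]=s+\sum_{1\neq \sigma\in H}s\sigma,$$
exploited from two different directions. On the left-hand side, $\mu\in\Delta$ together with the remark following the definition of $I(\Delta)$ gives $[H]\in I(\Delta)$; using $s=\alpha \pi(s)$ and the two-sidedness of $I(\Delta)$ in $\zed \Sym(n)$, one obtains
$$s\cdot [H]=\alpha\bigl(\pi(s)[H]\bigr)\in \alpha I(\Delta)\subseteq M'(\Delta).$$

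On the right-hand side, the preceding lemma applies directly: since $s$ is ascending on each $X_r$, every $1\neq\sigma\in H$ satisfies $s\sigma>s$ in the lexicographic order, and the $s\sigma$ are distinct basis elements of $\zed S(n)$ by simple transitivity. The maximality of $s$ among elements of $S(n)\setminus M'(\Delta)$ therefore forces each $s\sigma\in M'(\Delta)$, so the whole sum $\sum_{1\neq \sigma}s\sigma$ lies in $M'(\Delta)$; rearranging yields $s\in M'(\Delta)$, which is the desired contradiction.

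The only slightly delicate bookkeeping is the passage between the two-sided ideal $I(\Delta)\subseteq \zed\Sym(n)$ and its image $\alpha I(\Delta)\subseteq \zed S(n)$ under the simply transitive action, which justifies the containment $s\cdot [H]\in M'(\Delta)$. Once that identification is in place, the proof is essentially the observation that $[H]$ acts on $s$ by producing $s$ plus lexicographically larger terms, and both the sum and the larger terms lie in $M'(\Delta)$ by the two computations above.
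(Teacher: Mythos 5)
Your proof is correct and follows essentially the same route as the paper: pick $\mu\in A(s)\cap\Delta$, expand $s[H]=\alpha\pi(s)[H]\in\alpha I(\Delta)\subseteq M'(\Delta)$, and use Lemma 2.1 plus the maximality of $s$ to force $s\in M'(\Delta)$. The only cosmetic difference is that the paper first disposes of the case $A(s)\cap\Delta=\{(1^n)\}$ separately (there $1\in I(\Delta)$ gives $M'(\Delta)=\zed S(n)$ at once), whereas your argument absorbs it uniformly since the sum over $1\neq\sigma\in H$ is then empty.
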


\begin{proof}  If $A(s)\bigcap \Delta=\{(1^n)\}$ then $1\in I(\Delta)$ so that $I(\Delta)=\zed \Sym(n)$, and hence $M'(\Delta)=\zed S(n))$. 
 If $A(s)\bigcap \Delta$ is non-empty we may thus pick $(1^n)\neq \mu\in A(s)\bigcap \Delta$.  We  write $[1,n]$ as the disjoint union of subsets $X_1,\ldots,X_m$  of sizes $\mu_1,\ldots,\mu_m$ with $s$ increasing  on each $X_i$.   Thus we have 
   $$s[\Sym(X_1)\times \cdots \times \Sym(X_m)]= \alpha \pi(s) [\Sym(X_1)\times \cdots \times \Sym(X_m)] \in M'(\Delta)$$
   i.e.,
   $$s+\sum_{1\neq \sigma\in \Sym(X_1)\times \Sym(X_2)\times\cdots}  s\sigma\in M'(\Delta).$$
   But $s\sigma>s$ and hence $s\sigma\in M'(\Delta)$,  for $1\neq \sigma\in \Sym(X_1)\times \Sym(X_2)\times\cdots$. Hence $s\in M'(\Delta)$, a contradiction.   Thus $A(s)\bigcap \Delta=\emptyset$.
 \end{proof}

\begin{lemma} Let $\Delta$ be a co-saturated subset of $P=\Par(n)$ such that $P\backslash \Delta$ has a unique maximal element.   Then 
$$R\Sym(n)=RG(P\backslash \Delta)+  I_R(\Delta).$$
\end{lemma}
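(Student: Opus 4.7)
The plan is first to reduce to the case $R=\zed$: since $R\Sym(n)=R\otimes_\zed \zed\Sym(n)$, $I_R(\Delta)=R\otimes_\zed I(\Delta)$, and $RG(P\backslash\Delta)$ is obtained from $\zed G(P\backslash \Delta)$ by extension of scalars, it suffices to show $\zed\Sym(n)=\zed G(P\backslash\Delta)+I(\Delta)$, i.e., $M(\Delta)=\zed\Sym(n)$. Since $\Sym(n)$ acts simply transitively on $S(n)$ via $\alpha\pi(s)=s$, left multiplication by $\alpha$ gives a $\zed$-module isomorphism $\zed\Sym(n)\to \zed S(n)$ sending $G(\Gamma)$ to $G'(\Gamma)$ and $I(\Delta)$ to $\alpha I(\Delta)$, hence $M(\Delta)$ to $M'(\Delta)$. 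So it is enough to prove $M'(\Delta)=\zed S(n)$.

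I would argue by contradiction: suppose $M'(\Delta)\neq \zed S(n)$ and let $s$ be the largest element of $S(n)$, in the lexicographic order, not lying in $M'(\Delta)$. By the previous lemma, $A(s)\cap \Delta=\emptyset$, so $A(s)\subseteq P\backslash \Delta$. Since $\Delta$ is co-saturated, its complement $P\backslash \Delta$ is saturated (down-closed under $\trianglelefteq$). By hypothesis $P\backslash\Delta$ has a unique maximal element $\nu$, so being down-closed it equals $\{\lambda\in P:\lambda\trianglelefteq \nu\}$. In particular every element of $A(s)$ is $\trianglelefteq\nu$, so $\nu$ is an upper bound for $A(s)$ and therefore $\lub(A(s))\trianglelefteq \nu$, which (again by saturatedness) places $\lub(A(s))$ in $P\backslash\Delta$.

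Greene's Theorem 2.2 now gives $\sh(s)=\lub(A(s))\in P\backslash\Delta$, so $s\in G'(P\backslash\Delta)\subseteq M'(\Delta)$, contradicting the choice of $s$. Hence $M'(\Delta)=\zed S(n)$ and the lemma follows. The only step that is not purely formal is the passage from $A(s)\subseteq P\backslash\Delta$ to $\sh(s)=\lub(A(s))\in P\backslash\Delta$; this is precisely where the hypothesis that $P\backslash\Delta$ has a \emph{unique} maximal element is needed (it is what provides the common upper bound $\nu$ keeping the least upper bound inside $P\backslash\Delta$), and also where Greene's theorem is invoked to identify $\sh(s)$ with $\lub(A(s))$.
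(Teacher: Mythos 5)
Your proof is correct and follows essentially the same route as the paper's: reduce to $R=\zed$, transport to $\zed S(n)$, take the largest $s\notin M'(\Delta)$, apply Lemma 2.4 to get $A(s)\subseteq P\backslash\Delta$, and combine Greene's theorem with the unique-maximal-element hypothesis to conclude $\sh(s)\in P\backslash\Delta$, a contradiction. The only cosmetic difference is where the contradiction lands (the paper contradicts Lemma 2.4 at the end, you contradict the maximality of $s$), and you usefully spell out the step that $P\backslash\Delta$ equals the down-set of its unique maximal element, which the paper leaves implicit.
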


   \begin{proof}   We may assume $R=\zed$.   By transport of structure it is enough to prove that $M'(\Delta)=\zed S(n)$.  Suppose not and let $s$ be the largest  element of $S(n)$ not in $M'(\Delta)$. Then certainly $s\not \in G'(P\backslash \Delta)$, i.e., $\lambda=\sh(s)\in \Delta$.   If $A(s)\subseteq P\backslash \Delta$ then, since $P\backslash \Delta$ has a unique maximal element, $\lambda=\lub(A(s))\in P\backslash \Delta$, a contradiction.   But then  $A(s)\bigcap \Delta\neq \emptyset$, contrary to Lemma 2.4.
    \end{proof}

\q For $\lambda\in \Par(n)$ we write $\Sp(\lambda)_R$ for the corresponding Specht module over group algebra of $\Sym(n)$ over a commutative  ring $R$.  We write  $f_\lambda$ for the number of standard $\lambda$ tableaux, i.e., the dimension of $\Sp(\lambda)_\que$ over $\que$,    and $\chi^\lambda$ for the character of $\Sp(\lambda)_\que$.   Thus, for a subset $\Gamma$ of $\Par(n)$,  we have $|G(\Gamma)|=\sum_{\lambda\in \Gamma} f_\lambda^2$. Moreover, 
we  have $f_\lambda=f_{\lambda^*}$, $\lambda\in \Par(n)$.

\begin{remark}  For $\Delta\subseteq \Par(n)$ co-saturated  the dimension of $I_\que(\Delta)$ is $\sum_{\lambda\in \Delta} f_\lambda^2$. Indeed, for $\lambda\in \Delta$ we have $\que  \Sym(n) [\Sym(\lambda)]\subseteq I_\que(\Delta)$ and the left $\que \Sym(n)$-module $\que  \Sym(n) [\Sym(\lambda)]$ may be viewed as the induced module $\que\uparrow_{\Sym(\lambda)}^{\Sym(n)}$ and this has character of the form $\sum_{\mu\trianglerighteq \lambda}a_{\mu} \chi^\mu$,  for non-negative integers $a_{\mu}$, with $a_\lambda=1$, see e.g., Young's Rule, \cite[Section 7.3,  Corollary 1]{Fulton}.   Hence $\que \Sym(n) [\Sym(\lambda)]$ is a direct sum of modules of the form $\Sp(\mu)_\que$, for $\mu\in \Delta$. Hence, by co-saturation,  \\
$I_\que(\Delta)=\sum_{\lambda\in \Delta} \que \Sym(n) [\Sym(\lambda)]$ is isomorphic to $\bigoplus_{\lambda\in \Delta} \Sp(\lambda)_\que^{\oplus e_\lambda}$ for positive  integers $e_\lambda$.  Since $I_\que(\Delta)$ is an ideal, we have $e_\lambda=\dim \Sp(\lambda)_\que$, $\lambda\in \Delta$. 
Hence the $\que$-dimension of $I_\que(\Delta)$ is  $\sum_{\lambda\in \Delta} f_\lambda^2$.
\end{remark}

\begin{lemma} Let  $\Delta$ be a co-saturated  subset of $\Par(n)$. Then $I(\Delta)$ is a free $\zed$-module of rank $\sum_{\lambda\in \Delta} f_\lambda^2$.
\end{lemma}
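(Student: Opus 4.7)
The plan is to reduce the statement to Remark 2.6 by exploiting that submodules of free $\zed$-modules are free. First I would observe that $I(\Delta)$ sits inside the free $\zed$-module $\zed\Sym(n)$ of finite rank $n!$. Since $\zed$ is a principal ideal domain, every submodule of a finitely generated free $\zed$-module is itself free of finite rank. This takes care of freeness with no further work and shifts all the content of the lemma onto computing the rank.

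Next I would compute the rank by tensoring with $\que$. Since $\zed\Sym(n)$ is free, the inclusion $I(\Delta)\hookrightarrow \zed\Sym(n)$ remains injective after tensoring, and so $\rk_\zed I(\Delta)=\dim_\que\bigl(I(\Delta)\otimes_\zed \que\bigr)$. It therefore suffices to identify $I(\Delta)\otimes_\zed \que$ with $I_\que(\Delta)$ inside $\que\Sym(n)=\zed\Sym(n)\otimes_\zed \que$. This identification is straightforward: the generators $[\Sym(\lambda)]$, $\lambda\in\Delta$, lie in both, and each side is the ideal of its respective group algebra generated by these elements, so the two agree as $\que\Sym(n)$-submodules of $\que\Sym(n)$.

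Finally I would invoke Remark 2.6, which gives $\dim_\que I_\que(\Delta)=\sum_{\lambda\in \Delta}f_\lambda^2$. Combining this with the rank identity above yields $\rk_\zed I(\Delta)=\sum_{\lambda\in \Delta}f_\lambda^2$, as required. There is no serious obstacle; the only point requiring a moment of care is the identification $I(\Delta)\otimes_\zed\que=I_\que(\Delta)$, but this follows at once from the fact that forming an ideal from a prescribed generating set commutes with base change along $\zed\to\que$.
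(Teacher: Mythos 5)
Your proof is correct and follows essentially the same route as the paper: freeness comes from $I(\Delta)$ being a (torsion-free) submodule of the finitely generated free $\zed$-module $\zed\Sym(n)$, and the rank is computed by identifying $\que\otimes_\zed I(\Delta)$ with $I_\que(\Delta)$ and invoking Remark 2.6. The paper simply asserts the natural isomorphism $\que\otimes_\zed I(\Delta)\to I_\que(\Delta)$, whereas you justify it via flatness of $\que$ over $\zed$ and the compatibility of ideal generation with base change, which is a reasonable elaboration of the same step.
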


\begin{proof} We have the natural isomorphism $\que \otimes_\zed  I(\Delta)\to I_\que(\Delta)$ and $I(\Delta)$ is torsion free and hence free and its rank must be the $\que$-dimension of $I_\que(\Delta)$ and this is $\sum_{\lambda\in \Delta} f_\lambda^2$.
\end{proof}

\q The group algebra  $R\Sym(n)$ has the $R$-algebra automorphism ${}^\dagger$ given on group elements by $\sigma^\dagger=\sgn(\sigma) \sigma$, where $\sgn(\sigma)$ is the sign of $\sigma\in \Sym(n)$. For $D\subseteq R\Sym(n)$ we set $D^\dagger=\{d^\dagger \vert d\in D\}$.

   \begin{proposition}   Let $\Gamma$ a co-saturated subset of $\Par(n)$ that has a unique minimal element and let $\Delta=\Par(n)\backslash \Gamma^*$. 
   
   (i) $R\Sym(n)=RG(\Gamma)\oplus I_R(\Delta)=RG(\Gamma)\oplus I_R(\Delta)^\dagger$. 
   
   (ii) $I_R(\Delta)$ (\resp $I_R(\Delta)^\dagger$)  is free,  as  an $R$-module,  of rank $\sum_{\lambda\in \Delta} f_\lambda^2$ and the natural map $R\otimes_\zed I_\zed(\Delta)\to I_R(\Delta)$ (\resp $R\otimes_\zed I_\zed(\Delta)^\dagger \to I_R(\Delta)^\dagger$)  is an isomorphism.
 \end{proposition}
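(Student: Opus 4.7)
The plan is to apply Lemma 2.5 to $\Delta$ and then twist by the longest permutation $w_0\in\Sym(n)$, $w_0(i)=n+1-i$, in order to convert $G(\Gamma^*)$ into $G(\Gamma)$. Since duality $\lambda\mapsto\lambda^*$ reverses the dominance order, the set $\Gamma^*=\Par(n)\setminus\Delta$ is saturated with a unique maximal element (dual to the unique minimal element of $\Gamma$), so $\Delta$ is co-saturated and Lemma 2.5 yields $R\Sym(n)=RG(\Gamma^*)+I_R(\Delta)$. From $\pi(s)=s$ (as functions $[1,n]\to[1,n]$) and $s^*_i=s_{n+1-i}$ one computes $\pi(s^*)=\pi(s)w_0$, and combined with Schensted's $\sh(s^*)=\sh(s)^*$ this yields $G(\Gamma^*)=G(\Gamma)w_0$. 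Substituting into the previous equation and right-multiplying by $w_0$, using that the two-sided ideal $I_R(\Delta)$ satisfies $I_R(\Delta)w_0=I_R(\Delta)$, produces $R\Sym(n)=RG(\Gamma)+I_R(\Delta)$.

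To upgrade this sum to a direct sum and prove (ii) I first specialise to $R=\zed$. By Lemma 2.7, $I_\zed(\Delta)$ is $\zed$-free of rank $\sum_{\lambda\in\Delta}f_\lambda^2$, which using $f_\mu=f_{\mu^*}$ and $\Delta=\Par(n)\setminus\Gamma^*$ equals $n!-|G(\Gamma)|$. Since $\zed\Sym(n)$ is $\zed$-free of rank $n!$ and since $\zed G(\Gamma)$ and $I_\zed(\Delta)$ sum to it while their ranks sum to $n!$, the sum must be direct: $\zed\Sym(n)=\zed G(\Gamma)\oplus I_\zed(\Delta)$. In particular $I_\zed(\Delta)$ is a $\zed$-direct summand of $\zed\Sym(n)$, so tensoring with $R$ preserves the direct sum and makes the natural map $R\otimes_\zed I_\zed(\Delta)\to I_R(\Delta)$ both injective (by the summand property) and surjective (by construction). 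This yields the first direct-sum equation in (i) together with the freeness and isomorphism claims of (ii).

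For the $\dagger$-version one applies the $R$-algebra automorphism ${}^\dagger$ to the decomposition $R\Sym(n)=RG(\Gamma)\oplus I_R(\Delta)$: it sends $I_R(\Delta)$ to $I_R(\Delta)^\dagger$ and preserves $RG(\Gamma)$ as an $R$-submodule (since ${}^\dagger$ merely rescales the basis elements $\sigma$ by the units $\pm 1$), giving the second direct-sum equation in (i); the corresponding freeness and natural-map statement for $I_R(\Delta)^\dagger$ follows by transporting the $\zed$-direct summand conclusion through ${}^\dagger$. The only non-formal step is the $w_0$-twist converting Lemma 2.5's $G(\Gamma^*)$ into the required $G(\Gamma)$, which is precisely where the two-sidedness of $I_R(\Delta)$ plays a role; the rest is rank bookkeeping and tensor-product formalism.
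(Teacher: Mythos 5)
Your proof is correct and follows essentially the same route as the paper's: Lemma 2.5 for surjectivity, the rank count $\sum_{\lambda\in\Gamma}f_\lambda^2+\sum_{\lambda\in\Delta}f_\lambda^2=n!$ (via $f_\lambda=f_{\lambda^*}$ and Lemma 2.7) to force the sum over $\zed$ to be direct, the direct-summand/purity observation for base change to general $R$, and the automorphism ${}^\dagger$ for the second decomposition. Your explicit $w_0$-twist, converting $G(\Gamma^*)=G(\Gamma)w_0$ via $\pi(s^*)=\pi(s)w_0$ and Schensted's $\sh(s^*)=\sh(s)^*$ and then using two-sidedness of $I_R(\Delta)$, is a step the paper's proof passes over silently when it invokes Lemma 2.5 (which as stated yields $RG(\Gamma^*)+I_R(\Delta)=R\Sym(n)$ rather than $RG(\Gamma)+I_R(\Delta)=R\Sym(n)$), so this is a worthwhile clarification rather than a divergence.
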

 
 \begin{proof}  We note that if $\mu$ is the unique minimal element of $\Gamma$ then $\mu^*$ is the unique maximal element of $\Gamma^*=\Par(n)\backslash \Delta$.
 
 \q    The rank of $\zed G(\Gamma)$ is $|G(\Gamma)|=\sum_{\lambda\in \Gamma} f_\lambda^2=\sum_{\lambda\in \Gamma^*} f_\lambda^2$ and the rank of $I(\Delta)$ is $\sum_{\lambda\in \Delta}f_\lambda^2=\sum_{\lambda\in \Par(n)\backslash \Gamma^*}f_\lambda^2$.  Hence the rank of $\zed G(\Gamma)\bigoplus I(\Delta)$ is $\sum_{\lambda\in \Par(n)}f_\lambda^2=|\Sym(n)|$.  Thus the   natural map $\zed G(\Gamma)\bigoplus I_\zed(\Delta)\to \zed S(n)$ is, by Lemma 2.5,  a surjective homomorphism  between finitely generated torsion free abelian groups of the same rank and hence an isomorphism. Thus we have $\zed \Sym(n)= \zed G(\Gamma)\bigoplus I_\zed(\Delta)$.  Thus $I_\zed(\Delta)$ is a pure subgroup of $\zed S(n)$ and so the natural map $R\otimes_\zed \Sym(n)\to R \Sym(n)$ restricts to an  injective map $R\otimes_\zed I_\zed(\Delta)\to I_R(\Delta)$. However the map is surjective,  from the definitions,  and hence an isomorphism. This proves $R\Sym(n)=RG(\Gamma)\bigoplus I_R(\Delta)$  and hence, applying ${}^\dagger$ also $R\Sym(n)=RG(\Gamma) \bigoplus I_R(\Delta)^\dagger$.

  \q  We have shown that $R\otimes_\zed I(\Delta)\to I_R(\Delta)$ is an isomorphism. To get the statement for $I_R(\Delta)^\dagger$ apply the anti-automorphism ${}^\dagger$.
 \end{proof}

 \q  For $\lambda\in \Par(n)$ we have the $\Sym(n)$-set $\Sym(n)/\Sym(\lambda)$. We write $M(\lambda)_R$ for the corresponding permutation module for $R\Sym(n)$. This may be viewed as the induced module $R\Sym(n)\otimes_{R\Sym(\lambda)} R$, and the natural map $M(\lambda)_R\to R\Sym(n) [\Sym(\lambda)]$ is an $R\Sym(n)$-module isomorphism.
  If $\phi:R\to R'$ is a homomorphism of commutative rings we have the natural $R'\Sym(n)$-module isomorphism $R'\otimes_R M(\lambda)_R\to M(\lambda)_{R'}$.
 
\q   We say that an  $R \Sym(n)$-module $M$ is a Young permutation module if it is isomorphic to $\bigoplus_{\lambda\in \Par(n)} M(\lambda)_R^{\oplus u_\lambda}$ for some non-negative integers $u_\lambda$, $\lambda\in \Par(n)$. We say that  such a module $M$ has type $\Gamma$, for some $\Gamma\subseteq \Par(n)$, if $u_\lambda \neq 0$  if and only if  $\lambda\in \Gamma$.

  \q We have the $R$-linear map $\ep_R: R \Sym(n)\to R$ given on permutations by 
   $$\ep_R(\sigma)=
 \begin{cases} 1, & \hbox{ if } \sigma=1;\\
 0, & \hbox{ if } \sigma\neq 1
 \end{cases}
 $$
  and the symmetric, bilinear form on $R\Sym(n)$ given by $(f,f')=\ep_R(ff')$.  We have $(ff',f'')=(f,f'f'')$, for $f,f',f''\in R\Sym(n)$ and moreover the form induces an $R$-module  isomorphism 
  $$\eta_R: R \Sym(n)\to \Hom_R(R\Sym(n),R)$$
   given by 
  $\eta_R(x)(y)=(x,y)$.

  \begin{remark} Let $\Gamma$ be a co-saturated subset of $\Par(n)$  with unique minimal element and let  $\Delta=\Par(n)\backslash \Gamma^*$. Then $I_R(\Delta)$ is an $R$-module  direct summand of $R\Sym(n)$, by Proposition 2.8, and hence the map $\zeta_R: R\Sym(n)\to \Hom_R(I_R(\Delta),R)$, given by $\zeta_R(x)(y)=(x,y)$,  is surjective. 
  \end{remark}
  
  \q We shall write $\Ann_A(M)$ for the annihilator $\{a\in A \vert aM=0\}$ of a left module $M$ over a ring $A$.

 \begin{corollary} Let  $\Gamma$ be a co-saturated subset of $\Par(n)$ that has a unique minimal element  and let $M$ be a Young permutation module for $R\Sym(n)$  of type $\Gamma$.

 (i)  The restriction $\theta_R: R G(\Gamma)\to \End_R(M)$ of the representation \\
 $\rho_R:R\Sym(n)\to \End_R(M)$  is injective.

 (ii) The annihilator of $M$ is $I_R(\Delta)^\dagger$, where $\Delta=\Par(n)\backslash \Gamma^*$. 
 \end{corollary}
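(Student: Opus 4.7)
The plan is to prove (ii), from which (i) is immediate: Proposition~2.8 gives the $R$-module decomposition $R\Sym(n)=RG(\Gamma)\oplus I_R(\Delta)^\dagger$, so once $\Ann_R(M)=I_R(\Delta)^\dagger$ we have $\ker\theta_R=RG(\Gamma)\cap\Ann_R(M)=RG(\Gamma)\cap I_R(\Delta)^\dagger=0$, giving (i).

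For the inclusion $I_R(\Delta)^\dagger\subseteq\Ann_R(M)$: since $\Ann_R(M)$ is a two-sided ideal, it is enough to check that each generator $[\Sym(\lambda)]^\dagger$ with $\lambda\in\Delta$ kills each summand $M(\mu)_R$, $\mu\in\Gamma$. On a coset $\sigma\Sym(\mu)$ the element $[\Sym(\lambda)]^\dagger$ acts by $\sum_{\tau\in\Sym(\lambda)}\sgn(\tau)\,\tau\sigma\Sym(\mu)$, which vanishes precisely when $\Sym(\lambda)\cap\sigma\Sym(\mu)\sigma^{-1}$ contains a transposition. By a marriage (Gale--Ryser) argument, the existence of a $\sigma$ avoiding every such transposition is equivalent to the existence of a $0$--$1$ matrix with row sums $\lambda$ and column sums $\mu$, that is to $\mu\trianglelefteq\lambda^{*}$. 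For $\mu\in\Gamma$ and $\lambda\in\Delta$ we have $\mu\trianglerighteq\mu_0$ and $\lambda\not\trianglelefteq\mu_0^{*}$; an inequality $\mu\trianglelefteq\lambda^{*}$ would give $\lambda\trianglelefteq\mu^{*}\trianglelefteq\mu_0^{*}$, contradicting $\lambda\in\Delta$. Hence $[\Sym(\lambda)]^\dagger\cdot M(\mu)_R=0$.

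For $\Ann_R(M)\subseteq I_R(\Delta)^\dagger$ I reduce to the universal module $M_0:=\bigoplus_{\mu\in\Gamma}M(\mu)_R$, which is a direct summand of any Young permutation module of type $\Gamma$. Over $\mathbb{Q}$, Young's Rule (as in Remark~2.7) identifies the composition factors of $M_{0,\mathbb{Q}}$ as exactly the Specht modules $\Sp(\nu)_{\mathbb{Q}}$ with $\nu\in\Gamma$, so by Wedderburn $\dim_{\mathbb{Q}}\Ann_{\mathbb{Q}}(M_{0,\mathbb{Q}})=\sum_{\nu\notin\Gamma}f_{\nu}^{2}=\sum_{\lambda\in\Delta}f_{\lambda}^{2}$ (using $f_\nu=f_{\nu^{*}}$), which matches $\dim_{\mathbb{Q}}I_{\mathbb{Q}}(\Delta)^\dagger$ from Proposition~2.8(ii); combined with the previous paragraph this forces equality over $\mathbb{Q}$. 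Over $\mathbb{Z}$, the quotient $\mathbb{Z}\Sym(n)/\Ann_{\mathbb{Z}}(M_{0,\mathbb{Z}})$ embeds into the $\mathbb{Z}$-free module $\End_{\mathbb{Z}}(M_{0,\mathbb{Z}})$ and is therefore $\mathbb{Z}$-free, so $\Ann_{\mathbb{Z}}(M_{0,\mathbb{Z}})$ is a direct summand of $\mathbb{Z}\Sym(n)$; the same is true of $I_{\mathbb{Z}}(\Delta)^\dagger$ by Proposition~2.8, and two direct summands agreeing rationally must coincide, yielding $\Ann_{\mathbb{Z}}(M_{0,\mathbb{Z}})=I_{\mathbb{Z}}(\Delta)^\dagger$.

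The main obstacle is extending this equality to an arbitrary commutative ring $R$. By Proposition~2.8(ii) we have $R\Sym(n)/I_R(\Delta)^\dagger\cong R\otimes_{\mathbb{Z}}\bigl(\mathbb{Z}\Sym(n)/I_{\mathbb{Z}}(\Delta)^\dagger\bigr)$, and since $M_{0,\mathbb{Z}}$ is $\mathbb{Z}$-free of finite rank we have $\End_R(M_{0,R})\cong R\otimes_{\mathbb{Z}}\End_{\mathbb{Z}}(M_{0,\mathbb{Z}})$; the map we must show injective is the $R$-base change of the $\mathbb{Z}$-module injection $\iota:\mathbb{Z}\Sym(n)/I_{\mathbb{Z}}(\Delta)^\dagger\hookrightarrow\End_{\mathbb{Z}}(M_{0,\mathbb{Z}})$ produced in the previous paragraph. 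Base change preserves injectivity exactly when $\iota$ is $\mathbb{Z}$-split, equivalently when its cokernel is $\mathbb{Z}$-torsion-free. I anticipate verifying this splitting by exhibiting, inside the system of $\{0,1\}$-coefficient relations $\sum_{g\in\rho Y\cap G(\Gamma)}c_g=0$ (indexed by $\mu\in\Gamma$, Young subgroups $Y$ of type $\mu$, and left cosets $\rho Y$, obtained by unwinding $y\cdot M_0=0$ for $y=\sum_{g\in G(\Gamma)}c_g g$), a unimodular $|G(\Gamma)|\times|G(\Gamma)|$ sub-block, whose existence then forces $c_g=0$ over any $R$.
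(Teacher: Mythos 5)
Your argument is sound up to and including the case $R=\zed$: the containment $I_R(\Delta)^\dagger\subseteq\Ann_{R\Sym(n)}(M)$ via the Gale--Ryser criterion is a correct (if more self-contained) substitute for the paper's citation of \cite[4.6 Lemma]{James}, the Wedderburn dimension count over $\que$ is fine, and the purity argument over $\zed$ is fine. The genuine gap is the last step, which you yourself flag as "the main obstacle" and then only "anticipate verifying". Showing that the cokernel of $\zed\Sym(n)/I_\zed(\Delta)^\dagger\hookrightarrow\End_\zed(M_{0,\zed})$ is torsion-free is \emph{equivalent} to the assertion that $\Ann_{\efp\Sym(n)}(M_{0,\efp})=I_{\efp}(\Delta)^\dagger$ for every prime $p$ --- i.e.\ to the statement of (ii) over fields of positive characteristic, which is exactly the hard case (Wedderburn is unavailable there, and annihilators of permutation modules do not in general commute naively with base change). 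So "exhibiting a unimodular $|G(\Gamma)|\times|G(\Gamma)|$ sub-block" of the incidence relations is not a reduction of the problem but a restatement of it, and nothing in your proposal indicates how such a minor would be found; in particular your argument so far has made no use of Greene's theorem or of the set $G(\Gamma)$ beyond its cardinality, whereas the existence of such an integral basis is precisely what the unimodularity assertion encodes.

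The paper closes this gap differently and you should compare. It first proves (i) for \emph{Artinian} rings (hence all fields, including $\efp$) by a length count: the bilinear form $(f,f')=\ep_R(ff')$ identifies $\Ann_{R\Sym(n)}(M)$ with $I_R(\Gamma)^\perp$, the surjectivity of $\zeta_R:R\Sym(n)\to\Hom_R(I_R(\Gamma),R)$ (Remark 2.9) gives $l(R\Sym(n)/\Ann_{R\Sym(n)}(M))=l(I_R(\Gamma))$, and the decomposition of Proposition 2.8 --- which holds over an \emph{arbitrary} $R$ because the basis $G(\Gamma)\cup(\text{basis of }I(\Delta))$ is integral, this being the content of Lemma 2.5 and hence of Greene's theorem --- converts this into $l(\Ann_{R\Sym(n)}(M))=l(I_R(\Delta)^\dagger)$, forcing equality. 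It then passes from fields to arbitrary commutative rings by an elementary reduction: coefficients of a putative element of $\Ker(\theta_R)$ lie in every prime ideal, hence in the nilradical, hence one may assume $R$ finitely generated as an abelian group, and then $R/dR$ is finite (Artinian, hence good) for every $d$, forcing the coefficients into $\bigcap_d dR=0$. If you want to complete your own route, you would need to supply the positive-characteristic case by some such device; as written, the proposal proves the corollary only for $R$ a subring of $\que$.
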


 \begin{proof}  
  \q Consider the annihilator $\Ann_R(M)$.  For $x\in R\Sym(n)$ we have $x\in \Ann_R(M)$ if  and only if $x R\Sym(n)[\Sym(\lambda)]=0$ for all $\lambda\in \Gamma$. By  \cite[4.6 Lemma]{James} we have $I(\Delta)^\dagger\subseteq \Ann_{R\Sym(n)}(M)$.  Furthermore, we have  $x I_R(\Gamma)=0$  if and only if $(xI_R(\Gamma),y)=0$ for all $y\in R\Sym(n)$, i.e., if and only if $(x,I_R(\Gamma) y)=0$ for all $y\in R\Sym(n)$, i.e.,  if and only $(x,I_R(\Gamma))=0$. Thus $\Ann_{R\Sym(n)}(M)=I_R(\Gamma)^\perp$.

 \q We shall call  $R$ good if $\theta_R: R G(\Gamma)\to \End_R(M)$ is injective and call $R$ bad otherwise.  
 
 \q Note that if $R$ is a subring of a commutative ring $R'$ and $R'$ is good then, by considering the action of $R'\Sym(n)$ on $R'\otimes_R M$, we get that $R$ is good. Note also that if $J$ is an ideal of $R$ and $R/J$ is good and $\sum_{\sigma\in G(\Gamma)} x_\sigma \sigma\in \Ker(\theta_R)$ then from,  the injectivity of $\theta_{R/J}:(R/J) G(\Gamma) \to R/J\otimes_R M$, we get that   $x_\sigma\in J$, for all $\sigma\in G(\Gamma)$.
 
 \q We start by showing that  Artinian rings are good.  So suppose $R$ is Artinian. We write $l(X)$ for the composition length of a finitely generated $R$-module $X$.  Thus  $l(R)$ denotes the composition length of the left regular module.

 \q The natural map $\zeta_R: R\Sym(n)\to \Hom_R(I_R(\Gamma),R)$ is surjective (by Remark  2.9 with $\Gamma$ replaced by $\Delta$)   and its kernel is $I_R(\Gamma)^\perp=\Ann_{R\Sym(n)}(M)$.  Hence $\zeta_R$  induces an
 $R$-module  isomorphism $R\Sym(n)/\Ann_{R\Sym(n)}(M)\to \Hom_R(I_R(\Gamma),R)$ and hence we have 
 $$l(R\Sym(n)/\Ann_{R\Sym(n)}(M))=l(\Hom_R(I_R(\Gamma),R)).$$
\q Now $R\Sym(n) = RG(\Delta)\oplus I_R(\Gamma)$, by Proposition 2.8,  so that 
\begin{align*}l(\Hom_R(I_R(\Gamma),R)=l(I_R(\Gamma))&= l (R\Sym(n)) - l(R) |G(\Delta)|.
\end{align*}
Thus we have 
$$l(R\Sym(n))-l(\Ann_{R\Sym(n)}(M))=l(R\Sym(n))-l(R)|G(\Delta)|$$
and so
$$l(\Ann_{R\Sym(n)}(M))=l(R)|G(\Delta)|$$
which, by Proposition 2.8 (ii), is $l(I_R(\Delta))=l(I_R(\Delta)^\dagger)$.  But now \\
$l(\Ann_{R\Sym(n)}(M))=l(I_R(\Delta)^\dagger)$ and  $I_R(\Delta)^\dagger\subseteq \Ann_{R\Sym(n)}(M)$. Hence  $\Ann_{R\Sym(n)}(M)=I_R(\Delta)^\dagger$.  Thus if $x\in \Ker(\theta_R)$ then $x\in I_R(\Delta)^\dagger$ so  $x\in RG(\Gamma) \bigcap I_R(\Delta)^\dagger$ and  so  $x=0$ by Proposition 2.8(i). 
 
 \q In particular we get that $R$ is good if it is a  field and hence also $R$ is good if it an integral domain (a subring of a field). 
 
 \q Suppose, for a contradiction, that there is a bad commutative ring $R$. Thus we have a non-zero element $\sum_{\sigma\in G(\Gamma)} x_\sigma \sigma\in \Ker(\theta_R)$. If $P$ is a prime ideal of $R$ then, since $R/P$ is good,  all $x_\sigma\in P$. Hence each $x_\sigma$ belongs to the intersection of all prime ideals, i.e., the nilradical.  We may replace $R$ by the subring generated by all $x_\sigma$. Since each $x_\sigma$ is nilpotent, this is  finitely generated as an abelian group.   For a positive integer $d$, the ring $R/dR$ is finite and hence good. Thus, for all $\sigma\in G(\Gamma)$, we have  $x_\sigma\in \bigcap_{d\in \en}  dR=0$. This completes the proof of (i). 
 
 \q As  noted above, we have $I_R(\Delta)^\dagger \subseteq \Ann_{R\Sym(n)}(M)$ so that (i) and Proposition 2.8(i)  give  (ii).
  \end{proof}

  \begin{example}  Let $n,r\geq 1$. Consider  the action of $\Sym(r)$ on $I(n,r)$ by composition of mappings, i.e., $\sigma i=i\circ \sigma^{-1}$, for $\sigma\in \Sym(r)$, $i\in I(n,r)$.  The permutation module $R I(n,r)$ for $R\Sym(r)$ may be viewed as the $r$-fold tensor product
  $V^{\otimes r}=V\otimes_R \otimes  \cdots \otimes_R V$
  where $V$ is a free  $R$-module  of rank $n$, with the action by place permutations.   The stabiliser of $i\in I(n,r)$ is the Young subgroup 
  $\Sym(\alpha)$, where $\alpha=(\alpha_1,\alpha_2,\ldots)$, with  $\alpha_1=|i^{-1}(1)|, \alpha_2=|i^{-1}(2)|$, and so on.  Thus $RI(n,r)$ is a Young permutation module of type $\Gamma=\Lambda^+(n,r)$, the set of  partitions of $r$ with at most $n$ parts.   Then $\Lambda^+(n,r)$ has unique minimal element $\beta$, where $\beta=(r)$, if $n=1$ and $\beta=(a,a,\ldots,a,b)$, if $n>1$, and $r=(n-1)a+b$, for some negative integers $a,b$ with $b\leq  n-1$.

  \q Now $\Delta=\Par(r) \backslash \Lambda^+(n,r)^*$, is the set of partitions $\mu$ of $r$ with first entry $\mu_1>n$.   By the Corollary 2.10,(ii), the annihilator $J$ of $R I(n,r)=V^{\otimes r}$ is $0$ if $n\geq r$ and is otherwise generated by $\sum_{\sigma\in \Sym(X)} \sgn(\sigma) \sigma$, where $X$ is a subset of $[1,r]$ of size $n+1$.  Moreover, $R\Sym(r)/J$ has basis $\sigma + J$, $\sigma\in G(\Gamma)$.  
  
  \q Now $\pi(s)\in G(\Gamma)$ if and only if $\sh(s)\in \Gamma$, i.e.,  if and only if $\sh(s^*)\in \Gamma^*$,  i.e.,  if and only if $\sh(s^*)_1\leq n$, i.e.,  if and only if $\lambda\in A(s^*)$ implies $\lambda_1\leq n$, i.e.,  if and only if $s$ has no descending subsequence of length $n+1$.  This is Theorem 1 of \cite{RSS}. 
   \end{example}

\q We make a slight extension to  Corollary 2.10.    If $\mu=(\mu_1,\mu_2,\ldots)$ is a partition of $n$ and $\mu_r=a+b$, for some $r\geq 1$, $a,b>0$ then we say that the partition whose parts are $\mu_1,\ldots,\mu_{r-1},a,b,\mu_{r+1},\ldots$ (in some order) is a \emph{simple refinement}  of $\mu$. For  $\lambda,\mu\in \Par(n)$ we say  that $\lambda$ is a  refinement of $\mu$ if  there is a sequence of partitions $\lambda=\tau(0),\tau(1),\ldots,\tau(m)=\mu$ such that $\tau(i)$ is a simple refinement of $\tau(i+1)$ for $0\leq i<m$.   For $\lambda,\mu\in \Par(n)$ we say that $\mu$ is a coarsening of  $\lambda$ if $\lambda$ is a refinement of $\mu$. (For more information see e.g., \cite[Chapter I, Section 6]{Mac}.)

\q Suppose that $\mu=(\mu_1,\mu_2,\ldots)\in \Par(n)$ and that $\lambda$ is a simple refinement of $\mu$ whose parts are $\mu_1,\ldots,\mu_{r-1},a,b,\mu_{r+1},\ldots$ (in some order) with $\mu_r=a+b$ with $a,b>0$.    Let $X_1,X_2,\ldots$ be a subsets of $[1,n]$ such that each $X_i$ has size $\mu_i$, $i\geq 1$. Let $X_r=Y\bigcup Z$, for subsets $Y,Z$ of sizes $a$ and $b$.  Then $\Sym(X_1)\times \cdots\times \Sym(X_{r-1}) \times \Sym(Y)\times \Sym(Z)\times \Sym(X_{r+1}) \times \cdots$ embeds in $\Sym(X_1)\times\cdots\times \Sym(X_r) \times \cdots$ and so a conjugate of $\Sym(\lambda)$ is contained in a conjugate of $\Sym(\mu)$. Hence there is a surjection of $\Sym(n)$-sets from $\Sym(n)/\Sym(\lambda)$ to $\Sym(n)/\Sym(\mu)$. Hence $\Ann_{R\Sym(n)}(M(\mu)_R)\subseteq \Ann_{R\Sym(n)}(M(\lambda)_R)$.

  \q For $\Gamma\subseteq \Par(n)$ we write $\hat\Gamma$ for the set of all $\mu \in \Par(n)$ such that $\mu$ is a coarsening of some $\lambda\in \Gamma$.
  
  \begin{corollary} Let $\Gamma$  be a subset of $\Par(n)$ containing a unique minimal element  and let $M$ be a Young permutation module for $R\Sym(n)$  of type $\Gamma$. If $\hat\Gamma$ is co-saturated then $\Ann_{R\Sym(n)}=I_R(\Delta)^\dagger$, where $\Delta=(\Par(n)\backslash \hat\Gamma)^*$ and the elements $\sigma + \Ann_{R\Sym(n)}(M)$, $\sigma \in G(\hat\Gamma)$ form an $R$-basis of \\
  $R\Sym(n)/\Ann_{R\Sym(n)}(M)$. 
  \end{corollary}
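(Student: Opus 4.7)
The plan is to reduce to Corollary 2.10 applied to a Young permutation module $M'$ of type $\hat\Gamma$ (for instance $M'=\bigoplus_{\nu\in\hat\Gamma}M(\nu)_R$). First I would check that $\hat\Gamma$ satisfies the hypotheses of Corollary 2.10. Co-saturation holds by assumption. For the unique minimum, the minimum $\mu$ of $\Gamma$ also lies in $\hat\Gamma$ (vacuously a coarsening of itself), and any $\nu\in\hat\Gamma$ is a coarsening of some $\lambda\in\Gamma$, giving $\nu\trianglerighteq\lambda\trianglerighteq\mu$ in the dominance order, since coarsening only moves a partition up under $\trianglelefteq$. Hence $\mu$ is the unique minimum of $\hat\Gamma$.

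The core step is the identity $\Ann_{R\Sym(n)}(M)=\Ann_{R\Sym(n)}(M')$. The inclusion $\Ann(M')\subseteq\Ann(M)$ is immediate from $\Gamma\subseteq\hat\Gamma$ together with the fact that the annihilator of a direct sum is the intersection of individual annihilators. For the reverse inclusion, given $\nu\in\hat\Gamma$ I would choose $\lambda\in\Gamma$ of which $\nu$ is a coarsening and iterate, one simple refinement at a time, the $\Sym(n)$-set surjection $\Sym(n)/\Sym(\lambda)\twoheadrightarrow\Sym(n)/\Sym(\nu)$ constructed in the paragraph preceding the corollary, to obtain an $R\Sym(n)$-module surjection $M(\lambda)_R\twoheadrightarrow M(\nu)_R$. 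This yields $\Ann(M(\lambda)_R)\subseteq\Ann(M(\nu)_R)$, so $\Ann(M)\subseteq\Ann(M(\lambda)_R)\subseteq\Ann(M(\nu)_R)$ for every $\nu\in\hat\Gamma$, and hence $\Ann(M)\subseteq\Ann(M')$.

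Having established $\Ann(M)=\Ann(M')$, I would invoke Corollary 2.10 applied to the pair $(M',\hat\Gamma)$: its part (ii) gives $\Ann_{R\Sym(n)}(M')=I_R(\Delta)^\dagger$ with $\Delta=\Par(n)\setminus\hat\Gamma^{\,*}$, and combined with the decomposition $R\Sym(n)=RG(\hat\Gamma)\oplus I_R(\Delta)^\dagger$ from Proposition 2.8 it follows that $\{\sigma+\Ann_{R\Sym(n)}(M'):\sigma\in G(\hat\Gamma)\}$ is an $R$-basis of $R\Sym(n)/\Ann_{R\Sym(n)}(M')$. Since $\lambda\mapsto\lambda^*$ is an involution on $\Par(n)$, one has $\Par(n)\setminus\hat\Gamma^{\,*}=(\Par(n)\setminus\hat\Gamma)^*$, matching the formulation of the corollary.

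The main obstacle, essentially bookkeeping rather than substance, is keeping straight the direction of the annihilator inclusion under coarsening: this is settled by noting that $\Sym(\lambda)\subseteq\Sym(\nu)$ up to conjugacy when $\nu$ is a coarsening of $\lambda$, producing the surjection $M(\lambda)_R\twoheadrightarrow M(\nu)_R$ in the required direction. Beyond this no new Specht module or combinatorial input is needed; the argument is a direct reduction to the co-saturated case already handled.
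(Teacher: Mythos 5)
Your proof is correct and follows essentially the same route as the paper: replace $M$ by $\hat M=\bigoplus_{\nu\in\hat\Gamma}M(\nu)_R$, show $\Ann_{R\Sym(n)}(M)=\Ann_{R\Sym(n)}(\hat M)$ via the coarsening surjections $M(\lambda)_R\twoheadrightarrow M(\nu)_R$, and apply Corollary 2.10 to $\hat\Gamma$. You merely fill in details the paper leaves implicit (that $\hat\Gamma$ inherits a unique minimal element, and the correct orientation of the annihilator inclusion under coarsening), so there is no difference in substance.
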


  \begin{proof}  We put $\hat M= \bigoplus_{\lambda\in \hat\Gamma} M(\lambda)_R$. Then $\Ann_{R\Sym(n)}(M)=\Ann_{R\Sym(n)}(\hat M)$. The result follows from Corollary 2.11.
  \end{proof}

  \begin{example}  We consider the permutation $\Sym(n)$ module $V$ with $R$-basis $v_1,\ldots,v_n$, on which $\Sym(n)$ acts by $\sigma v_i=v_{\sigma(i)}$, $\sigma\in \Sym(n)$, $1\leq i\leq n$.  Then $V^{\otimes r}$ is an $R\Sym(n)$-module. It is the permutation module on the set $I(n,r)$ with action $\sigma\cdot i=\sigma\circ i$.  The point stabiliser  of $i\in I(n,r)$ is conjugate to $\Sym(a,1^b)$, where $b$ is the size of the image of $i$. Thus $V^{\otimes r}$ is a Young module of type
  $$\Gamma=\{(a,1^b) \vert a\geq n-r\}$$
  and 
  $$\hat\Gamma=\{\lambda=(\lambda_1,\lambda_2,\ldots) \in \Par(n)\vert \lambda_1\geq n-r\}.$$
   This is a co-saturated set. 
  Moreover $\Gamma$ (and hence $\hat \Gamma$) has unique minimal element $(n-r,1^r)$ (if $r <n$, and otherwise $\Gamma=\Par(n)$).  Hence we may apply Corollary 2.12.
 We write $\len(\mu)$ for the length of a partition $\mu\in \Par(n)$. We have 
  $$\Delta=(\Par(n)\backslash \hat\Gamma)^*=\{\mu \in \Par(n)\vert \len(\mu)<n-r\}.$$
  
  The kernel of the action of $R\Sym(n)$ on $I(n,r)$ is $I_R(\Delta)^\dagger$ and the elements 
  
 $\sigma + I_R(\Delta)^\dagger$, $\sigma\in G(\hat \Gamma)$,  form an $R$-basis of $R\Sym(n)/I_R(\Delta)^\dagger$.   Explicitly, the kernel of the action of $R\Sym(n)$ on $V^{\otimes r}=RI(n,r)$ is  the ideal $I_R(\Delta)^\dagger$, generated by the elements $\sum_{\sigma\in \Sym(\mu)} \sgn(\sigma) \sigma$, as $\mu$ ranges over all partitions of length less than $n-r$ and $G(\hat \Gamma)$ is the set of $\pi(s)$, such that $s\in S(n)$ contains an increasing sequence of length $n-r$. 
 
 \q This is essentially \cite[Theorem 1]{DoDS}.
  \end{example}

\begin{remark}  It would be interesting to know whether the main results of this section, in particular Corollary 2.12, hold for all co-saturated subsets $\Gamma$ of $\Par(n)$ (without the restriction that $\Gamma$ has a unique minimal element). In fact all goes through without change provided that  $M(\Delta)=\zed G(P\backslash \Delta)+ I(\Delta)$ is $\zed \Sym(n)$, equivalently $M'(\Delta)=\zed S(n)$  (as in Lemma 2.5). 
\end{remark}

  \begin{example} For $n\leq 5$ the set $\Par(n)$ is totally ordered and so that Lemma 2.5 holds   for all co-saturated subsets $\Delta$.  For $n=6$ and $\Delta$ a co-saturated subset of $P=\Par(6)$ the complement $P\backslash \Delta$ has a unique maximal element unless $\Delta$ is either   $\Delta=\{6,(5,1),(4,2)\}$ or \\
  $\{6,(5,1),(4,2),(4,1,1),(3,3),(3,2,1)\}$.  We check that the desired property  holds in these cases too.

(i)  We consider first $\Delta=\{6,(5,1),(4,2)\}$.  Suppose, for a contradiction  $M'(\Delta)\neq \zed S(6)$ and  let $s$ be  the largest element of $S(6)$ not in $M'(\Delta)$. Then $A(s)\bigcap \Delta=\emptyset$ and $\sh(s)\in \Delta$.  This implies that $\sh(s)=(4,2)$ but $(4,2)\not\in A(s)$. There are two possibilities (details omitted).  We can have $s=412563$ or $236145$. However, we have $\pi(412563)^{-1}=\pi(236145)$ and $M(\Delta)$ is invariant under the ring automorphism of $\zed \Sym(6)$  taking $\sigma\in \Sym(6)$ to $\sigma^{-1}$. Hence $412563\in M'(\Delta)$ if and only if  $236145\in M'(\Delta)$.  Thus we may assume $s=412563$.  Now we take $H=\Sym\{1,5\}\times \Sym\{2,3,4,6\}$.  Then we have $s[H]\in  M'(\Delta)$. However 
$$s[H]=s+t+y$$
where $t=412365$ and $y$ is a sum of terms $u\in S(6)$ with $u>s$. Hence $s+t\in M'(\Delta)$ and hence $t\not\in M'(\Delta)$. 

\q However, we make take now $H=\Sym\{1,6\}\times \Sym\{2,3,4,5\}$ and we get $t[H]\in M'(\Delta)$ but 
$$t[H]=t+z$$
where $z$ is a sum of elements $v\in  S(6)$ with $v>s$. Hence $t\in M'(\Delta)$, a contradiction.

\medskip
        
        (ii) We consider $\Delta=  \{6,(5,1),(4,2),(4,1,1),(3,3),(3,2,1)\}$.  Suppose, for a contradiction that $M'(\Delta)\neq \zed S(6)$ and  let $s$ be  the largest element of $S(6)$ not in $M'(\Delta)$.  Then $\sh(s)=(3,2,1)$. However this implies $(3,2,1)\in A(s)$ (details  omitted)  and so $A(s)\bigcap \Delta\neq \emptyset$, a contradiction. 
        
\q Hence the desired   property $\zed G(P\backslash \Delta)+ I(\Delta)=\zed \Sym(6)$ holds for all co-saturated subsets $\Delta$ of $\Par(6)$. 
\end{example}

\bs\bs\bs\bs

\section{Complements: Cell Structure, Annihilators, Generators and Relations}

\bs\bs

\bf Cell Structure

\bs

\rm \q We first recall the notion of a cellular algebra due to Graham and Lehrer, \cite{GL}. (We have made some minor notational changes, the most serious of which is the reversal of the partial order from the definition given in \cite{GL}.)

\begin{definition} Let $A$ be an algebra over a commutative ring $R$. A cell datum $(\Lambda,N,C,{}^*)$ for $A$ consists of the following.

(C1) A partially ordered set $\Lambda$ and, for each $\lambda\in \Lambda$, a finite set $N(\lambda)$ and an injective map $C:\coprod_{\lambda\in \Lambda} N(\lambda) \times N(\lambda) \to A$ with image an $R$-basis of $A$.

(C2) For  $\lambda\in \Lambda$ and $t,u\in N(\lambda)$ we write $C(t,u)=C^\lambda_{t,u}$. Then ${}^*$ is an  $R$-linear involutory anti-automorphism  of the ring $A$ such that $(C^\lambda_{t,u})^*=C^\lambda_{u,t}$.

(C3) If $\lambda\in \Lambda$ and $t,u\in N(\lambda)$ then, for any element $a\in A$, we have 
$$aC^\lambda_{t,u}\equiv \sum_{t'\in N(\lambda)} r_a(t',t) C^\lambda_{t',u} \hskip 20pt ({\rm mod} \  A(>\lambda))$$
where $r_a(t',t)$ is independent of $u$ and where $A(>\lambda)$ is the $R$-submodule of $A$ generated by $\{C^\mu_{t'',u''} \vert \mu\in \Lambda, \mu >\lambda {\rm \ and \   } t'',u''\in N(\mu)\}$. 
\end{definition}

\q We also call a cell datum for $A$ a cell structure on $A$. We suppose that $A$ is an $R$-algebra with a cell datum   $(\Lambda,N,C,{}^*)$. For a  co-saturated subset $\Phi$ of $\Lambda$  we have the corresponding cell idea $A(\Phi)$ and, for $\lambda\in \Lambda$, we have the cell module $W_A(\lambda)$, as in \cite{GL}.    If $\phi:R\to R'$ is a homomorphism of commutative rings then one obtains, by base change,  a cell datum on  $A_{R'}=R'\otimes_R A$. The inclusion $A(\Phi)\to A$ induces an isomorphism $R'\otimes_R A(\Phi) \to 
A'(\Phi)$ and we have the natural isomorphism of cell modules  $R'\otimes_R W_A(\lambda)\to W_{A'}(\lambda)$, $\lambda\in \Lambda$.

\q We specialize to group algebras of symmetric groups.  For a subset $\Gamma$ of $\Par(n)$ we define $B(\Gamma)_\que$ to be the annihilator of  $\bigoplus_{\lambda\in \Par(n)\backslash \Gamma} \Sp(\lambda)_\que$.  Equivalently $B(\Gamma)_\que$ is the largest ideal which, as a left $\que \Sym(n)$-module, is a direct sum of copies of $\Sp(\lambda)_\que$, $\lambda\in \Gamma$. 
We write $B(\Gamma)_\zed$ for $B(\Gamma)_\que \bigcap \zed \Sym(n)$.   Then $B(\Gamma)_\zed$ is a pure submodule of $\zed \Sym(n)$ so that, for an arbirary commutative ring $R$, the natural map $R\otimes_\zed B(\Gamma)_\zed\to R \Sym(n)$ is injective. We write $B(\Gamma)_R$ for its image.    For $\Gamma\subseteq \Par(n)$ we have $B(\Gamma^*)_R= B(\Gamma)_R^\dagger$.

\q Recall that, on $A=\zed\Sym(n)$, there is a cellular structure $(\Lambda,N,C,{}^*)$, with $\Lambda=\Par(n)$ with the dominance partial order and such that the cell module $W_{A_\que}(\lambda)$ for $A_\que=\que \Sym(n)$,  has character $\chi^\lambda$, $\lambda\in \Par(n)$, for example by \cite[proof of (5.7) Lemma]{GL}.   We shall say that such a cell structure on $\zed \Sym(n)$ is  regular.

\begin{remark} Suppose    $(\Lambda,N,C,{}^*)$ is a regular cell structure on \\
$A=\zed\Sym(n)$.  Let $\Gamma$ be a co-saturated subset of $\Par(n)$. Then, for any commutative ring $R$, we have $A(\Gamma)_R=B(\Gamma)_R$, in particular the cell ideals of $R\Sym(n)$ do not depend on the choice of  (regular) cell structure. 

\q To see this we note that $A(\Gamma)_\que$ is a direct sum of modules $\Sp(\lambda)_\que$, $\lambda\in\Gamma$ and $\que \Sym(n)/A(\Gamma)_\que$ is a direct sum of modules $\Sp(\lambda)_\que$, $\lambda\in \Par(n)\backslash \Gamma$. It follows that $A(\Gamma)_\que=B(\Gamma)_\que$. Also $A(\Gamma)_\zed$ annihilates $\bigoplus_{\lambda\in \Par(n)\backslash \Gamma} \Sp(\lambda)_\que$ and hence $A(\Gamma)_\zed \subseteq B(\Gamma)_\zed$. But $A(\Gamma)_\zed$ and $B(\Gamma)_\zed$ are torsion free of the same rank and $\zed \Sym(n)/A(\Gamma)_\zed$ and $\zed \Sym(n)/B(\Gamma)_\zed$ are torsion free. Hence we must have $A(\Gamma)_\zed=B(\Gamma)_\zed$.  The natural maps \\
$R\otimes_\zed A(\Gamma)_\zed\to A(\Gamma)_R$ and $R\otimes_\zed B(\Gamma)_\zed\to B(\Gamma)_R$ are isomorphism. The result follows.
\end{remark}

\q We fix a regular cell structure on $\zed \Sym(n)$.

\begin{remark}Let $\Gamma$ be a saturated subset of $\Par(n)$ and let $X$ be a direct sum of modules $\Sp(\lambda)_\que$, $\lambda\in \Gamma$,  each occurring at least once.   Then the annihilator of $X$ in $\zed \Sym(n)$ is the cell ideal $\zed \Sym(n)(\Phi)$, where $\Phi$ is the complement of $\Gamma$ in $\Par(n)$. 

\q Let $I$ be the annihilator of $X$ in $\zed \Sym(n)$.   The inclusion of $I$ in $\zed \Sym(n)$ induces an isomorphism $\que\otimes_\zed I\to \que \Sym(n) (\Phi)$. Hence the rank of $I$ is $\sum_{\lambda\in \Phi} f_\lambda^2$. Moreover, $I$ contains the cell ideal $\zed \Sym(n)(\Phi)$ which also has rank $\sum_{\lambda\in \Phi} f_\lambda^2$. Moreover $\zed \Sym(n)/I$ is torsion free and hence \\
$I=\zed \Sym(n)(\Phi)$. 
\end{remark}

\q Let $R$ be a commutative ring. If $Y$ is a $\zed \Sym(n)$-module we write $Y_R$ for the $R\Sym(n)$-module  $R\otimes_\zed Y$.

\begin{proposition} Let $\Gamma$ be a subset of $\Par(n)$ and let   $M$ be a finite  a direct sum of modules $M(\lambda)_\zed$, $\lambda\in \Gamma$, with each $M(\lambda)_\zed$ occurring at least once. If   ${\hat \Gamma}$ is co-saturated then the inclusion of  $\Ann_{\zed \Sym(n)}$ in $\zed\Sym(n)$ induces an isomorphism   
$R\otimes_\zed \Ann_{\zed \Sym(n)}(M) \to \Ann_{R\Sym(n)}(M_R)$.

\end{proposition}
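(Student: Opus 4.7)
The plan is to identify $\Ann_{\zed\Sym(n)}(M)$ with the $^\dagger$-twist of a cell ideal, apply cellular base change, and then handle the reverse inclusion via a purity argument. First, I would reduce to the case $\Gamma=\hat\Gamma$: by the refinement/coarsening discussion preceding Corollary 2.12, whenever $\lambda$ is a simple refinement of $\mu$ one has $\Ann_{R\Sym(n)}(M(\mu)_R)\subseteq\Ann_{R\Sym(n)}(M(\lambda)_R)$, and the same argument applies over $\zed$. Hence replacing $M$ by $\hat M=\bigoplus_{\mu\in\hat\Gamma}M(\mu)_\zed$ changes neither $\Ann_{\zed\Sym(n)}(M)$ nor $\Ann_{R\Sym(n)}(M_R)$, so I may assume $\Gamma=\hat\Gamma$ is co-saturated.

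Next I would identify the integral annihilator. By Young's rule, $M_\que$ is a direct sum of copies of $\Sp(\mu)_\que$ for precisely those $\mu$ in the dominance-upward closure of $\Gamma$, which by co-saturation equals $\Gamma$ itself. Thus $\Ann_{\que\Sym(n)}(M_\que)=B(\Par(n)\backslash\Gamma)_\que$. Setting $\Psi=(\Par(n)\backslash\Gamma)^*$ (co-saturated, since $\Par(n)\backslash\Gamma$ is saturated), the identity $B(\Gamma_0^*)_\que=B(\Gamma_0)_\que^\dagger$ combined with Remark 3.2 (which gives $B(\Psi)_\que=A(\Psi)_\que$) shows $\Ann_{\que\Sym(n)}(M_\que)=A(\Psi)_\que^\dagger$. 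Intersecting with $\zed\Sym(n)$ yields $\Ann_{\zed\Sym(n)}(M)=A(\Psi)_\zed^\dagger$. The base change isomorphism for cell ideals (recorded after Definition 3.1) and the fact that $^\dagger$ is a $\zed$-linear ring automorphism commuting with base change then give that $R\otimes_\zed\Ann_{\zed\Sym(n)}(M)\to R\Sym(n)$ is injective with image $A(\Psi)_R^\dagger$, clearly contained in $\Ann_{R\Sym(n)}(M_R)$.

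The main obstacle is the reverse containment $\Ann_{R\Sym(n)}(M_R)\subseteq A(\Psi)_R^\dagger$. Since $M$ is finitely generated and $\zed$-free, $\End_R(M_R)=R\otimes_\zed\End_\zed(M)$, and the representation $\rho_\zed:\zed\Sym(n)\to\End_\zed(M)$ has image $\zed\Sym(n)/\Ann_{\zed\Sym(n)}(M)$ embedded in the free $\zed$-module $\End_\zed(M)$. The desired equality $\Ann_{R\Sym(n)}(M_R)=R\otimes_\zed\Ann_{\zed\Sym(n)}(M)$ would follow by tensoring the two inclusions $\Ann_{\zed\Sym(n)}(M)\hookrightarrow\zed\Sym(n)$ and $\rho_\zed(\zed\Sym(n))\hookrightarrow\End_\zed(M)$ with $R$, provided the latter inclusion is $\zed$-pure (so that its base change remains injective). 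I expect this purity to be the crux of the argument, where results from \cite{DonkinPermMods} on the endomorphism algebras of Young permutation modules are brought to bear.
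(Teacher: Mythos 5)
Your strategy is viable and, at bottom, rests on the same external input as the paper's proof, namely the field case of the statement from \cite[Proposition 5.9(ii)]{DonkinPermMods}; but the step you yourself flag as the crux --- the $\zed$-purity of $\rho_\zed(\zed\Sym(n))$ inside $\End_\zed(M)$ --- is exactly where all the content lies, and you leave it unproved. It does close along your lines: since $\End_\zed(M)/\rho_\zed(\zed\Sym(n))$ is a finitely generated abelian group, purity amounts to injectivity of ${\mathbb F}_p\otimes_\zed\rho_\zed(\zed\Sym(n))\to\End_{{\mathbb F}_p}(M_{{\mathbb F}_p})$ for every prime $p$; the image of this map is $\rho_{{\mathbb F}_p}({\mathbb F}_p\Sym(n))$, so by a dimension count injectivity is equivalent to $\dim_{{\mathbb F}_p}\Ann_{{\mathbb F}_p\Sym(n)}(M_{{\mathbb F}_p})=\dim_\que\Ann_{\que\Sym(n)}(M_\que)$, which is precisely what the cited proposition gives. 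Once purity is known the quotient is free, the inclusion splits, and tensoring with an arbitrary $R$ finishes the argument as you indicate. The paper takes a different route around the purity question: given $x=\sum_\sigma x_\sigma\sigma\in\Ann_{R\Sym(n)}(M_R)$ it passes to the (Noetherian) subring of $R$ generated by the $x_\sigma$, notes that $X=\Ann_{R\Sym(n)}(M_R)/I_R$ is then finitely generated, and kills $X$ by checking $F\otimes_R X=0$ for every residue field $F$ of $R$, again via the same field-case isomorphism. Your version, once completed, only needs the prime fields $\que$ and ${\mathbb F}_p$, which is a mild gain; the paper's avoids discussing $\End_\zed(M)$ altogether.

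Two smaller points. Your identification of $\Ann_{\zed\Sym(n)}(M)$ with the twisted cell ideal $A(\Psi)_\zed^\dagger$ is correct but is much more machinery than is needed for the purity of $\Ann_{\zed\Sym(n)}(M)$ in $\zed\Sym(n)$: that is immediate from the embedding of $\zed\Sym(n)/\Ann_{\zed\Sym(n)}(M)$ into the torsion-free group $\End_\zed(M)$, which is how the paper gets it. Also, in your reduction to $\Gamma=\hat\Gamma$ you quote the inclusion $\Ann(M(\mu)_R)\subseteq\Ann(M(\lambda)_R)$ for $\lambda$ a refinement of $\mu$ as it appears in the paper; the surjection $M(\lambda)_R\to M(\mu)_R$ actually yields $\Ann_{R\Sym(n)}(M(\lambda)_R)\subseteq\Ann_{R\Sym(n)}(M(\mu)_R)$, and it is this corrected inclusion that gives $\Ann(M)=\Ann(\hat M)$. (The paper's own proof of this proposition does not make that reduction at all.)
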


\begin{proof}    Let $I$ be the annihilator of $M_\zed$ in $\zed \Sym(n)$.   Then   $I$ is a pure submodule of $\zed \Sym(n)$ so we identify $I_R$ with an ideal  of $R\Sym(n)$.   Certainly $I_R\subseteq \Ann_{R\Sym(n)}(M_R)$ so it suffices to show the reverse inclusion.

\q We consider an element $x=\sum_{\sigma\in \Sym(n)} x_\sigma  \sigma$ of $ \Ann_{R\Sym(n)}(M_R)$. Let $R'$ be the subring of $R$ generated by all $x_\sigma$. Then $R'$ is Noetherian. Moreover $x\in \Ann_{R'\Sym(n)}(M_{R'})$.   If the natural map $R'\otimes_\zed I\to R'\Sym(n)$ has image $\Ann_{R'\Sym(n)}(M_{R'})$ then $x$ belongs to the image of the map $R'\otimes_\zed I\to R'\Sym(n)$  and hence to the image of  $R\otimes_\zed I\to R\Sym(n)$. Thus we may assume that $R$ is Noetherian. 

\q Now $X=\Ann_{R\Sym(n)}(M_R)/I_R$ is a finitely generated $R$-module. 
 For a field $F$ the  inclusion of  $\Ann_{\zed \Sym(n)}(M_\zed)$ in $\zed\Sym(n)$ induces an isomorphism   $F\otimes_\zed \Ann_{\zed \Sym(n)}(M) \to \Ann_{F\Sym(n)}(M_F)$, by \cite[Proposition 5.9 (ii)]{DonkinPermMods}.  It follows that for every homomorphism from $R$ into a field $F$ we have $F\otimes_R X=0$. Hence $X=0$, i.e., $I_R=\Ann_{R\Sym(n)}(M_R)$.   \end{proof}
  
  \bs\bs

  \bf Annihilators of Young Permutation Modules
  
\bs
\rm

  \q   Our main purpose now is   the following. The result was obtained by elementary arguments  in Section 2 as Corollary 2.10(ii)  in the special case in which $\hat \Gamma$ has a unique minimal element.

  \begin{theorem} Let $\Gamma$ be a subset of $\Par(n)$ such that $\hat \Gamma$ is co-saturated  and let $\Delta=\Par(n)\backslash \hat \Gamma^*$.  Then $I_R(\Delta)^\dagger$ is the annihilator of any  Young permutation for $R\Sym(n)$ of type $\Gamma$.
   \end{theorem}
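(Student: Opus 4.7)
The plan is to reduce to the case $R = \zed$ using Proposition 3.4, identify $\Ann_{\zed\Sym(n)}(M_\zed)$ with the cellular ideal $B(\Phi)_\zed$ via character theory over $\que$, and then match this with $I_\zed(\Delta)^\dagger$ by comparing both sides of the transpose involution. Since the annihilator of a Young module depends only on the support of the multiplicities, I would replace an arbitrary Young module of type $\Gamma$ by $M_R := \bigoplus_{\lambda \in \Gamma} M(\lambda)_R = R \otimes_\zed M_\zed$, where $M_\zed := \bigoplus_{\lambda \in \Gamma} M(\lambda)_\zed$. Proposition 3.4 then gives $R \otimes_\zed \Ann_{\zed\Sym(n)}(M_\zed) \cong \Ann_{R\Sym(n)}(M_R)$, so it suffices to treat $R = \zed$; the corresponding base change from $I_\zed(\Delta)^\dagger$ to $I_R(\Delta)^\dagger$ is built into the definition of the latter and the purity of the $\zed$-ideal inside $\zed\Sym(n)$.

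Over $\que$ I would argue by characters. Because coarsenings of a partition dominate it, $\hat\Gamma$ is always contained in the upward dominance closure of $\Gamma$; the co-saturation hypothesis reverses this, so $\hat\Gamma$ \emph{equals} that closure. By Young's rule (as recalled in Remark 2.6), the set of $\mu$ for which $\Sp(\mu)_\que$ is a constituent of $M_\que$ is therefore exactly $\hat\Gamma$, each occurring with positive multiplicity, giving $\Ann_{\que\Sym(n)}(M_\que) = B(\Phi)_\que$ with $\Phi := \Par(n) \setminus \hat\Gamma$. Since $M_\zed$ is $\zed$-torsion free, this descends to $\Ann_{\zed\Sym(n)}(M_\zed) = \zed\Sym(n) \cap B(\Phi)_\que = B(\Phi)_\zed$. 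On the other side, the set $\Delta = \Par(n) \setminus \hat\Gamma^*$ is co-saturated (its complement $\hat\Gamma^*$ is saturated because $\hat\Gamma$ is co-saturated and $*$ reverses dominance). By Remark 2.6 applied to $\Delta$, $I_\que(\Delta)$ is the sum of all copies of $\Sp(\mu)_\que$ for $\mu \in \Delta$; that is, $I_\que(\Delta) = B(\Delta)_\que$, whence $I_\zed(\Delta) = B(\Delta)_\zed$. Applying the anti-automorphism ${}^\dagger$ and using $B(\Gamma^*)_R = B(\Gamma)_R^\dagger$ together with $\Delta^* = \Phi$, we obtain $I_\zed(\Delta)^\dagger = B(\Phi)_\zed = \Ann_{\zed\Sym(n)}(M_\zed)$, which is what we wanted.

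The main obstacle is bookkeeping rather than a single deep step: one must verify that co-saturation of $\hat\Gamma$ is exactly what forces $\hat\Gamma$ to coincide with the upward dominance closure of $\Gamma$ (so that Young's rule captures precisely the right Specht constituents of $M_\que$), and one must handle the ${}^\dagger$ involution carefully enough that $\Delta^* = \Phi$ gives the correct match between the two cellular ideals. Given these checks, the reduction via Proposition 3.4 combined with the three-way identification $\Ann_{\zed\Sym(n)}(M_\zed) = B(\Phi)_\zed = I_\zed(\Delta)^\dagger$ yields the theorem.
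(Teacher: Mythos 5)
Your reduction to $R=\zed$ via Proposition 3.4 is fine (since $I_R(\Delta)^\dagger$ is by definition the image of $R\otimes_\zed I_\zed(\Delta)^\dagger$), and your identification of the annihilator over $\que$ is correct: $\hat\Gamma$ co-saturated does force $\hat\Gamma$ to equal the upward dominance closure of $\Gamma$, so Young's rule gives $\Ann_{\que\Sym(n)}(M_\que)=B(\Phi)_\que$ and hence $\Ann_{\zed\Sym(n)}(M_\zed)=B(\Phi)_\zed$. Likewise $I_\que(\Delta)=B(\Delta)_\que$ by Remark 2.6. The gap is the single word ``whence'' in the step $I_\que(\Delta)=B(\Delta)_\que \Rightarrow I_\zed(\Delta)=B(\Delta)_\zed$. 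What you actually get from this is only $I_\zed(\Delta)\subseteq B(\Delta)_\zed=I_\que(\Delta)\cap\zed\Sym(n)$, and both sides are free $\zed$-modules of the same rank $\sum_{\lambda\in\Delta}f_\lambda^2$; that leaves open the possibility that $B(\Delta)_\zed/I_\zed(\Delta)$ is a non-trivial finite group. Equality is precisely the assertion that the ideal generated by the signed Young symmetrizers $[\Sym(\lambda)]$, $\lambda\in\Delta$, is a pure (saturated) sublattice of $\zed\Sym(n)$, equivalently that $\dim_F I_F(\Delta)$ is independent of the field $F$. Nothing in your argument, which lives entirely in characteristic $0$, addresses this.

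This is not bookkeeping: it is the whole content of the theorem, and it is exactly what the paper spends Step 3 of its proof establishing. In Section 2 the purity of $I_\zed(\Delta)$ is obtained (Proposition 2.8) only under the extra hypothesis that $\Gamma$ has a unique minimal element, via Greene's theorem and the explicit complementary basis $G(\Gamma)$; Remark 2.14 explicitly flags that the elementary argument does not extend to general co-saturated $\hat\Gamma$. The paper's proof of Theorem 3.5 therefore reduces instead to an algebraically closed field $F$ and transports $I_F(\Delta)^\dagger$ into the Schur algebra setting, identifying $\psi(I_F(\Delta)^\dagger)$ with the weight space ${}^\omega A(\pi)^\omega$ of the coalgebra $A(\pi)$, whose dimension is computed over any $F$ by the anti-standard bideterminant basis. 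That standard-basis (straightening) input is the characteristic-free dimension count your proof is missing; without it, or some substitute for it, the conclusion $I_\zed(\Delta)^\dagger=\Ann_{\zed\Sym(n)}(M_\zed)$ does not follow.
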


\q We argue by reduction to the case $R=F$, an algebraically closed field. To handle that case we make a digression into  the polynomial representation theory of the general linear group, in the setting due to  J. A. Green,  \cite{EGS}.

\q   For $F$-vector spaces $V,W$ we write simply  $V\otimes W$ for $V\otimes_F W$. 

\q Let $n$ be a positive integer and let $G(n)=\GL_n(F)$. We regard $G(n)$ as an algebraic group over $F$. For $1\leq i,j\leq n$ we have the coefficient function $c_{ij}$: for $g\in G(n)$, $c_{ij}(g)$ is its $(i,j)$-entry.   Then  $A(n)$ denotes the algebra of functions on $G(n)$ generated by all $c_{ij}$. The algebra  $A(n)$ is the polynomial algebra in the $c_{ij}$.

  \q  Now $A(n)$ has the structure of a bialgebra with structure maps \\
  $\de:A(n)\to A(n)\otimes A(n)$, $\ep: A(n)\to F$, satisfying
  $$\de(c_{ij})=\sum_{r=1}^n c_{ir}\otimes c_{rj} \hbox{ and } \ep(c_{ij})=\de_{ij}$$
  for $1\leq i,j\leq n$.  The $F$-algebra  $A(n)$ is  graded,  with each $c_{ij}$ having degree $1$,  and we have the decomposition into homogeneous components \\
  $A(n)=\bigoplus_{r\geq 0} A(n,r)$. The space $A(n,r)$ is a subcoalgebra of $A(n)$, for $r\geq 0$.  For $i,j\in I(n,r)$ we write $c_{ij}$ for the element $c_{i_1j_1}\ldots c_{i_rj_r}$. 
  Then $A(n,r)$ has the  $F$-basis $c_{ij}$, $i,j\in I(n,r)$.   The dual algebra $S(n,r)$ of $A(n,r)$ is the Schur algebra and has the basis $\xi_{ij}$, dual to the basis $c_{ij}$, $i,j\in I(n,r)$. 
  
  \q We write $\Lambda(n,r)$ for the set of sequences $\alpha=(\alpha_1,\ldots,\alpha_n)$ of non-negative integers such that $\alpha_1+\cdots+\alpha_n=r$. We write $\Lambda^+(n,r)$ for the set of $\alpha\in \Lambda(n,r)$ such that $\alpha_1\geq  \cdots \geq \alpha_n$ and identify $\Lambda^+(n,r)$ with the set of partitions of $r$ with at most $n$ parts, as in Example 2.11.

 \q The content $i\in I(n,r)$ is the element $\alpha=(\alpha_1,\ldots,\alpha_n)$ of $\Lambda(n,r)$, with first entry  $\alpha_1$  the number of $1$'s occurring in $i$, with second entry  $\alpha_2$   the number of $2$'s occurring in $i$, and so on. For $\alpha\in \Lambda(n,r)$ the element $\xi_\alpha$ is defined to be  $\xi_{ii}$, where $i$ is any element of $I(n,r)$ with content $\alpha$.    Let $\alpha\in \Lambda(n,r)$. A left (\resp right) $S(n,r)$-module $V$ has $\alpha$ weight space $V^\alpha=\xi_\alpha V$ (\resp ${}^\alpha V=V\xi_\alpha$).

  \q  We write $\Mod(G(n))$ for the category of rational $G(n)$-modules and \\
 $\mod(G(n))$ for the category of finite dimensional rational $G(n)$-modules.  We write $X^+(n)$ for the set of elements $\lambda=(\lambda_1,\ldots,\lambda_n)\in \zed^n$ such that $\lambda_1\geq\cdots \geq \lambda_n$.  For each $\lambda\in X^+(n)$ we have an irreducible rational $G(n)$-module $L(\lambda)$ with unique highest weight $\lambda$.  Let $\pi$ be a subset of $X^+(n)$. We say that $V\in \Mod(G(n))$ belongs to $\pi$ if each composition factor of $V$ has the form $L(\lambda)$, for some $\lambda\in\pi$.  For $V\in \Mod(G(n))$ there is,  among all submodules belonging to $\pi$,  a unique maximal one and we denote this $O_\pi(V)$.  We have a  left exact functor $O_\pi: \Mod(G(n))\to \Mod(G(n))$: if $f:V\to V'$ is a morphism then $O_\pi(f):O_\pi(V)\to O_\pi(V')$ is the restriction of $f$.  We can apply this to the coordinate algebra $F[G(n)]$, regarded as the left regular module. Then $O_\pi(F[G(n)])$ is a subcoaglebra of $F[G(n)]$. 
 
  \q For each $\lambda\in X^+(n)$ we have the induced module $\nabla(\lambda)$ with highest weight $\lambda$.    For $r\geq 0$ and $\lambda\in \Lambda^+(n,r)$ the dimension of $\nabla(\lambda)$ is the number of standard tableaux of shape $\lambda$, see e.g.,  \cite[(4.5a) Basis Theorem and (4.8f) Theorem]{EGS}.

\q  A good filtration of  $V\in \mod(G(n))$ is a filtration whose sections are induced modules and $V$ is called a tilting module if both $V$ and the dual module $V^*=\Hom_F(V,F)$ admit good filtrations.   

   \q We write $E$ for the natural $G(n)$-module.  For a finite string of non-negative integers  $\alpha=(\alpha_1,\alpha_2,\ldots)$ we write $\tbw^\alpha E$ for the tensor product 
 $\tbw^{\alpha_1}E \otimes \tbw^{\alpha_2} E\otimes \cdots$.     For $\lambda\in \Lambda^+(n)$ the module 
  $\tbw^{\lambda^*} E$ is a tilting module with unique highest weight $\lambda$, \cite[(3.4) Lemma (ii)]{DTilt}.
 
 \q Suppose that $\pi$ is  finite saturated subset of $\Lambda^+(n,r)$. We write $A(\pi)$ for $O_\pi(F[G])$.    Now  the dimension of $A(\pi)$ is equal to $\sum_{\lambda\in \pi} (\dim \nabla(\lambda))^2$, by \cite[(2.2c)]{ DSchurOne} and hence: 
 
 \bs

(1) \sl $\dim A(\pi)$   is equal to the number pairs of standard tableaux with common shape $\lambda$,  for some $\lambda\in \pi$.
\rm

 \bs
 
 \q  Let $V\in \mod(G(n))$ with basis  $v_1,\ldots,v_m$. Then we have the coefficient functions $f_{ij}\in F[G(n)]$, $1\leq i,j\leq m$, defined by the equations 
 $$gv_i=\sum_{j=1}^m f_{ji}(g)v_j$$
 for $g\in G(n)$. 
 The coefficient space $\cf(V)$ is the span of the elements $f_{ij}$, $1\leq i,j\leq m$. (It is independent of the choice of the basis.) It is a sub-coalgebra of $F[G(n)]$ and if  $V$ belongs to $\pi$ then  $\cf(V)\subseteq A(\pi)$.    We have  $A(\pi)\subseteq A(n,r)$ and,   by  \cite[Section 1, Theorem and Remark]{DInvSevMat}: 

\bs

(2) $A(\pi)=\sum_{\lambda\in \pi} \cf(\tbw^{\lambda^*} E)$.

\bs\rm

\q We shall need to know that $A(\pi)$ is spanned by bideterminants.    There are several treatments of  bideterminants  available (see especially \cite[Theorems 1,3]{DRS},   \cite[Theorem 3.2]{DEP}).  Our approach here is by taking $q=1$ in \cite[Section 1.3]{q-Donk}.
 
  \q For $i,j\in I(n,r)$ we define 
  $$(i:j)=\sum_{\sigma\in \Sym(r)} \sgn(\sigma)  c_{i,j\sigma}.$$
  
   More generally, for $i,j\in I(n,r)$, $\lambda\in \Par(r)$,  we define
  $$(i:j)^\lambda=\sum_{\sigma\in \Sym(\lambda)} \sgn(\sigma) c_{i,j\sigma}$$
  so that $(i:j)=(i:j)^{(r)}$.

   \q  We have the diagram $[\lambda]$ of $\lambda$, as in \cite[Section 4.2]{EGS}. By a $\lambda$-tableau we mean a map $T:[\lambda]\to \{1,2,\ldots,n\}$.  We write $\Tab(\lambda)$ for the set of $\lambda$-tableaux. We have the rows $T^i\in I(n,\lambda_i)$, $1\leq i\leq l$, of a $\lambda$-tableau $T$, where $l$ is the length of $\lambda$.  Thus $T^1=(T(1,1),\ldots,T(1,\lambda_1))$, $T^2=(T(2,1),\ldots,T(2,\lambda_2))$, and so on.   For $S,T\in \Tab(\lambda)$ we have the bideterminant
  $$(S:T)=(S^1:T^1)\ldots (S^l:T^l)\in A(n,r).$$

  \q For $T\in \Tab(\lambda)$ we have   the associated word $w(T)\in I(n,r)$ obtained by reading the entries from left to right (not right to left as in \cite[Chapter I, Section 9]{Mac}) and rows successively.  Then, for $S,T\in \Tab(\lambda)$, we have 
  $$(S:T)=(w(S):w(T))^\lambda.$$

      \q  The coefficient space of $\tbw^\lambda E$ is the $F$-span of the bideterminants $(S:T)$, $S,T\in \Tab(\lambda)$. See for example \cite[Section 1.3]{q-Donk} (taking $q=1$).

  \q Thus we have the following.
  
  \begin{lemma} Let $\pi$ be a saturated subset of $\Lambda^+(n,r)$. Then $A(\pi)$ is the $F$-span of all bideterminants $(S:T)$, $S,T\in \Tab(\lambda^*)$, $\lambda\in \pi$.
  \end{lemma}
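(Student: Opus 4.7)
The plan is to derive the statement as an immediate consequence of the two pieces of information collected just before it, namely (2) and the explicit description of the coefficient space $\cf(\tbw^{\lambda} E)$ as a span of bideterminants.

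First, I would invoke formula (2) in the excerpt, which asserts that
$$A(\pi)=\sum_{\lambda\in \pi} \cf(\tbw^{\lambda^*} E).$$
So it suffices to describe each summand $\cf(\tbw^{\lambda^*} E)$ as an $F$-span of bideterminants indexed by tableaux of shape $\lambda^*$.

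Next, I would apply the stated fact (from \cite[Section 1.3]{q-Donk}, specialised to $q=1$) that the coefficient space of $\tbw^{\mu} E$ is the $F$-span of the bideterminants $(S:T)$ with $S,T\in \Tab(\mu)$. Taking $\mu=\lambda^*$ and substituting into the previous equality yields
$$A(\pi)=\sum_{\lambda\in\pi} F\text{-span}\bigl\{(S:T)\bigm| S,T\in \Tab(\lambda^*)\bigr\},$$
which is exactly the asserted description of $A(\pi)$.

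There is no real obstacle here; both ingredients are already in the text, and the lemma merely records their combined consequence. The only thing to be careful about is the bookkeeping of the transpose: the summands of $A(\pi)$ are indexed by $\lambda^*$ for $\lambda\in\pi$, matching the labelling of the tableaux in the statement. Since $\pi$ is saturated in the dominance order, and since this bideterminant description is purely shape-by-shape, saturation of $\pi$ is not used in the proof itself; it simply ensures that $A(\pi)$ is a well-defined subcoalgebra to which (2) applies.
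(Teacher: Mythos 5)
Your proposal is correct and is exactly the paper's (implicit) argument: the lemma is stated immediately after (2) and the description of $\cf(\tbw^{\lambda}E)$ as the span of bideterminants $(S:T)$ with $S,T\in\Tab(\lambda)$, and is introduced with ``Thus we have the following,'' i.e.\ it is obtained precisely by substituting $\mu=\lambda^*$ into (2) as you do. Your remark that saturation of $\pi$ enters only through the applicability of (2) is also consistent with the text.
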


    \q An element $T\in \Tab(\lambda)$ is called anti-standard if it entries are strictly  decreasing along  rows and weakly decreasing down columns.  For $T\in \Tab(\lambda)$ we write $T^*$ for  $\lambda^*$-tableau defined by $T^*(i,j)=T(j,i)$,  $(i,j)\in [\lambda]$. Then $T\in \Tab(\lambda)$ is anti-standard if and only if $T^*$ is standard in the usual sense (as in \cite{EGS}).    
  
  \q Now the bideterminants $(S:T)$ with $S,T$ anti-standard and of shape belonging to  $\pi^*$ are independent, by \cite[Theorem 1.3.4]{q-Donk} and hence the \\
  $(S:T)$, with $S,T$ anti-standard of shape belonging to $\pi^*$ span a subspace of dimension equal to that of  $A(\pi)$, by (1).    Thus  we have:

\bs

(3) \sl \, The bideterminants $(S:T)$ with $S,T$ anti-standard and of shape  belonging to  $\pi^*$ form a basis of   $A(\pi)$.
 \rm

 \bs
 \rm
 
 \q Now $A(n,r)$ is naturally an $S(n,r)$-bimodule with actions satisfying
 $$\xi\cdot c_{ij}=\sum_{p\in I(n,r)} \xi(c_{pj})  c_{ip}\q \hbox{ and } \q c_{ij}\cdot \xi=\sum_{p\in I(n,r)} \xi(c_{ip}) c_{pj}$$
 for $\xi\in S(n,r)$, $i,j\in I(n,r)$, see \cite[Section 4.4]{EGS}.  It follows that, for $\lambda\in \Par(r)$,  we also have
 $$\xi\cdot (i:j)^\lambda=\sum_{p\in I(n,r)} \xi(c_{pi})(i:p)^\lambda \hbox{ \ and \q } (i:j)^\lambda\cdot \xi =\sum_{p\in I(n,r)} \xi(c_{ip})(p:j)^\lambda.$$

 \q Thus we have $(i:j)\in {}^\alpha A(n,r)^\beta$,  where $i$ has content $\beta$ and $j$ has content $\alpha$ and, for $S,T\in \Tab(\lambda)$ we have $(S:T)\in {}^\alpha A(n,r)^\beta$, where $r=|\lambda|$, $w(S)$ has content $\beta$ and $w(T)$ has content $\alpha$. 
 

\q From (3) we have the following.

  \begin{lemma} Let $\pi$ be a saturated subset of $\Lambda^+(n,r)$ and $\alpha,\beta\in \Lambda(n,r)$. Then the dimension of ${}^\alpha A(\pi)^\beta$ is independent of the  (algebraically closed) field $F$. 
  \end{lemma}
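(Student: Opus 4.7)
The plan is to read the dimension of ${}^\alpha A(\pi)^\beta$ directly from the basis of antistandard bideterminants provided in (3) and observe that the indexing set is purely combinatorial.

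First, I would recall from the displayed statement (3) that the bideterminants $(S:T)$, with $S,T$ antistandard of shape $\lambda^* \in \pi^*$ (equivalently $\lambda \in \pi$), form an $F$-basis of $A(\pi)$. Next, I would use the formulas for the $S(n,r)$-bimodule action on $A(n,r)$ recalled just before the lemma: for any $\lambda$ and any $S,T \in \Tab(\lambda)$, one has $(S:T) \in {}^\alpha A(n,r)^\beta$ where $\alpha$ is the content of $w(T)$ and $\beta$ is the content of $w(S)$. In particular each element of the basis in (3) is homogeneous with respect to the two-sided weight-space grading.

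From this, $A(\pi)$ decomposes as the direct sum of its weight spaces
\[
A(\pi)=\bigoplus_{\alpha,\beta\in\Lambda(n,r)}{}^\alpha A(\pi)^\beta,
\]
and the basis in (3) splits along this decomposition. Consequently, a basis of ${}^\alpha A(\pi)^\beta$ is given by the antistandard bideterminants $(S:T)$ of common shape $\mu\in\pi^*$ such that $w(S)$ has content $\beta$ and $w(T)$ has content $\alpha$. Since the set of such pairs $(S,T)$ depends only on the combinatorial data $\pi$, $\alpha$, $\beta$ (and $n$, $r$) and not on $F$, the dimension of ${}^\alpha A(\pi)^\beta$ is the same over every algebraically closed field.

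There is no real obstacle here; the essential content is the field-independence of the antistandard basis in (3), which is already invoked, combined with the fact that each antistandard bideterminant lies in a single weight space. The only point requiring a moment's care is the bookkeeping that matches the content of $w(S)$ and $w(T)$ to the right versus left weight-space label, and this is settled by the bimodule action formulas stated immediately before the lemma.
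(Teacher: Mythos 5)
Your proof is correct and is exactly the argument the paper intends: the paper simply says ``From (3) we have the following,'' relying on the fact that each anti-standard bideterminant lies in a single two-sided weight space, so the basis of $A(\pi)$ splits across the weight-space decomposition and the count is purely combinatorial. You have filled in the (omitted) details in the same way, including the correct matching of the contents of $w(S)$ and $w(T)$ to the right and left weight labels.
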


  \q We now suppose  $r\leq n$ and transport some of this structure to the group algebra of the symmetric group.  We have the element $u=(1,2,\ldots,r)$ of $I(n,r)$ and the element $\omega=(1,1,\ldots,1,0,\ldots,0)$ of $\Lambda(n,r)$.

\q   We set $e=\xi_\omega$.  Then the algebra $eS(n,r)e$ is isomorphic to the group algebra of the symmetric group $F\Sym(r)$, via the isomorphism $\phi:F\Sym(r)\to eS(n,r)e$, satisfying $\phi(\sigma)=\xi_{u\sigma,u}$, $\sigma\in \Sym(r)$.  
  
  \q By restricting to  left and right $\omega$ weight spaces in Lemma 3.7, we get the following.
  
  \begin{lemma} Let $\pi$ be a saturated subset of $\Lambda^+(n,r)$. Then ${}^\omega A(\pi)^\omega$ is the span of all bideterminants $(S:T)$, with $S,T\in \Tab(\lambda)$ and $\lambda^*\in \pi$, and $w(S)=w(T)=\omega$.  Moreover such $(S:T)$ with $S,T$ anti-standard form a basis. In particular the dimension of ${}^\omega A(\pi)^\omega$ is independent of the  (algebraically closed) field $F$. 

  \end{lemma}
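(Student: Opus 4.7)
The plan is to read the statement off from Lemma 3.6 and item (3) by restricting to the $(\omega,\omega)$-bimodule weight piece of $A(\pi)$. The three conclusions (spanning, basis, field-independence) will all drop out once it is observed that every bideterminant $(S:T)$ is homogeneous in the left-right weight grading of $A(n,r)$.

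First I would record that the set $\{\xi_\alpha : \alpha\in\Lambda(n,r)\}$ is a complete orthogonal family of idempotents in $S(n,r)$, so that $A(n,r)=\bigoplus_{\alpha,\beta\in\Lambda(n,r)} {}^\alpha A(n,r)^\beta$ as an $S(n,r)$-bimodule. Since $A(\pi)$ is a sub-bimodule of $A(n,r)$ (it is a subcoalgebra stable under both one-sided actions), this decomposition restricts to $A(\pi)=\bigoplus_{\alpha,\beta}{}^\alpha A(\pi)^\beta$. The action formulas for $\xi\cdot(i:j)^\lambda$ and $(i:j)^\lambda\cdot\xi$ displayed just above Lemma 3.7 then force $(S:T)$ to lie in a single summand: one has $(S:T)\in{}^\alpha A(\pi)^\beta$ with $\alpha$ the content of $w(T)$ and $\beta$ the content of $w(S)$.

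With this homogeneity in hand, apply Lemma 3.6 and project to the $(\omega,\omega)$-piece: the spanning set of $A(\pi)$ by bideterminants $(S:T)$ with $S,T\in\Tab(\lambda)$, $\lambda^*\in\pi$, restricts to the spanning set for ${}^\omega A(\pi)^\omega$ consisting of those $(S:T)$ with $w(S)$ and $w(T)$ both of content $\omega$, proving the first assertion. Applying the same projection to the anti-standard basis of $A(\pi)$ furnished by (3) yields a basis of ${}^\omega A(\pi)^\omega$, proving the second. The resulting basis is indexed by a purely combinatorial set (pairs $(S,T)$ of anti-standard $\lambda$-tableaux with $\lambda^*\in\pi$ and $w(S),w(T)$ of content $\omega$), whose cardinality depends only on $n,r$ and $\pi$, giving the claimed field-independence.

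There is no substantive obstacle: the lemma is a bookkeeping exercise in the weight-space decomposition of the bimodule $A(\pi)$, on top of the nontrivial inputs (Lemma 3.6 and the standard-basis theorem packaged as (3)) already established.
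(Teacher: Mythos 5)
Your proposal is correct and follows essentially the same route as the paper: the paper likewise observes that each bideterminant $(S:T)$ is homogeneous for the $S(n,r)$-bimodule weight decomposition (lying in ${}^\alpha A(n,r)^\beta$ with $\beta,\alpha$ the contents of $w(S),w(T)$) and then obtains the lemma by restricting the spanning statement of Lemma 3.6 and the anti-standard basis of (3) to the $(\omega,\omega)$ weight space. Your explicit remark that ``$w(S)=w(T)=\omega$'' should be read as ``$w(S)$ and $w(T)$ have content $\omega$'' matches the paper's intended meaning.
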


  \q We now relate $A(n,r)$ and the group algebra of the symmetric group. 
  
  \begin{lemma} The $F$-linear map $\psi: F\Sym(r)\to  {}^\omega A(n,r)^\omega$, given by  $\psi(\sigma)=c_{u,u\sigma}$, $\sigma\in \Sym(r)$, is an $F\Sym(r)$-bimodule isomorphism.
  \end{lemma}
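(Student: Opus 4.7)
The plan is to verify three things: that the image of $\psi$ actually lies inside ${}^\omega A(n,r)^\omega$; that $\psi$ is a linear bijection; and that it intertwines the bimodule structures, where the $F\Sym(r)$-bimodule structure on ${}^\omega A(n,r)^\omega$ is obtained through the algebra isomorphism $\phi:F\Sym(r)\to eS(n,r)e$.

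First, for containment in the weight space: since $u=(1,2,\ldots,r)$ has content $\omega$ and the $\Sym(r)$-action on $I(n,r)$ merely permutes entries (hence preserves content), each $u\sigma$ also has content $\omega$. By the formulas for the left and right $S(n,r)$-actions on $A(n,r)$ recalled just before Lemma 3.7, a basis element $c_{ij}$ lies in ${}^\omega A(n,r)^\omega$ precisely when both $i$ and $j$ have content $\omega$. Hence $\psi(\sigma)\in {}^\omega A(n,r)^\omega$.

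Next, for bijectivity, I would identify the natural basis of the weight space. Using $r\leq n$, the elements of $I(n,r)$ of content $\omega$ are exactly $\{u\pi:\pi\in\Sym(r)\}$, and the stabiliser of $u$ under the $\Sym(r)$-action is trivial. Hence every $\Sym(r)$-orbit on pairs $(u\pi, u\sigma)$ under the diagonal action contains a unique representative of the form $(u, u\tau)$, so $\{c_{u,u\tau}:\tau\in\Sym(r)\}$ is an $F$-basis of ${}^\omega A(n,r)^\omega$. Since $\psi$ sends the standard basis of $F\Sym(r)$ bijectively onto this basis, it is an $F$-linear isomorphism.

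For the bimodule property I would compute $\phi(\sigma)\cdot\psi(\tau)$ and $\psi(\tau)\cdot\phi(\sigma)$ directly from the $S(n,r)$-action formulas. For the left action,
$$\xi_{u\sigma,u}\cdot c_{u,u\tau}\;=\;\sum_{p\in I(n,r)}\xi_{u\sigma,u}(c_{p,u\tau})\,c_{u,p},$$
and the coefficient is non-zero exactly when $(u\sigma,u)\sim (p,u\tau)$ under the diagonal action, which forces $\pi=\tau$ (from $u\pi=u\tau$ and the injectivity of $u$) and then $p=u\sigma\tau$. Thus the sum collapses to $c_{u,u\sigma\tau}=\psi(\sigma\tau)$. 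The symmetric calculation of $c_{u,u\tau}\cdot\xi_{u\sigma,u}$ produces $c_{u\sigma^{-1},u\tau}$, which after re-normalising the first coordinate to $u$ by the permutation $\sigma$ equals $c_{u,u\tau\sigma}=\psi(\tau\sigma)$. The main thing to watch throughout is the bookkeeping: basis elements $c_{ij}$ depend only on the diagonal $\Sym(r)$-orbit of $(i,j)$, which is both what underlies the evaluation of $\xi_{ij}(c_{kl})$ and what enables the normalising step in the right-action computation. Beyond that, the proof is a direct unwinding of definitions and there is no substantive obstacle.
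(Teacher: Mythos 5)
Your proposal is correct and follows essentially the same route as the paper: the paper likewise reduces to checking that $\psi$ intertwines the two actions via the displayed $S(n,r)$-action formulas, collapsing the sum $\sum_{p}\xi_{u\sigma,u}(c_{p,u\tau})c_{u,p}$ to $c_{u,u\sigma\tau}$ (there by rewriting $\xi_{u\sigma,u}=\xi_{u\sigma\tau,u\tau}$, which is the same orbit observation you use). You merely spell out two points the paper leaves implicit, namely that $\{c_{u,u\tau}\,:\,\tau\in\Sym(r)\}$ is a basis of ${}^\omega A(n,r)^\omega$ when $r\leq n$ and the explicit right-action computation, both of which are correct.
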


\begin{proof} The space  ${}^\omega A(n,r)^\omega$ is an $F\Sym(r)$-subbimodule of $A(n,r)$ and $\psi$ is an $F$-linear isomorphism.  It thus suffices to show that $\psi$ commutes with the $\Sym(r)$-action. 

\q For $\sigma,\tau\in \Sym(r)$ we have 
\begin{align*}\sigma \psi(\tau)&=\xi_{u\sigma,u}\cdot c_{u,u\tau}=\sum_{p\in I(n,r)} \xi_{u\sigma,u}(c_{p,u\tau})  c_{u,p}\\
&=\sum_{p\in I(n,r)} \xi_{u\sigma\tau,u\tau}(c_{p,u\tau} ) c_{u,p}=c_{u,u\sigma\tau}\\
&=\psi(\sigma\tau).
\end{align*}
Hence $\psi$ respects the left action of $\Sym(r)$ and similarly the right action.
\end{proof}

\q We are now ready to give the proof of  Theorem 3.5. 

\bs

\it Step 1. \sl We may assume that $\Gamma$ is co-saturated. 

\bs
\rm

 \q   Let $M$ be a Young permutation module of type $\Gamma$ and let $\hat M=\bigoplus_{\lambda\in \hat \Gamma} M(\lambda)_R$.   Since   $M$ and $\hat M$ have the same annihilator we may replace $\Gamma$ by $\hat\Gamma$

\bs


   \it Step 2. \sl We may assume that $R$ is an algebraically closed   field.

   \bs\rm
   
 \q  By Proposition 3.4,  it is enough to take $R=\zed$. Suppose, $I(\Delta)^\dagger\neq \Ann_{\zed \Sym(n)}(M)$, where $M$ is a Young permutation module of type $\Gamma$.   Let $Q=\Ann_{\zed \Sym(n)}(M)/I(\Delta)^\dagger$.  Let $F$ be an algebraically closed field.
   
   \q We have  the commutative diagram 
   $$\begin{matrix}&F\otimes_\zed I(\Delta)^\dagger & \to & I_F(\Delta)^\dagger \\
   &\downarrow & & \downarrow \\
   &F\otimes_\zed \Ann_{\zed \Sym(n)}(M) & \to &\Ann_{F\Sym(n)}(M_F).
   \end{matrix}$$
   The top map  is surjective and, assuming the result for algebraically closed fields,    the right map  is surjective. The bottom map  is an isomorphism, by Proposition 3.4,  and so the left map is surjective. 
   
   \q But now we have the exact sequence 
   $$F\otimes_\zed I(\Delta)^\dagger \to F\otimes_\zed \Ann_{\zed \Sym(n)}(M)\to F\otimes_\zed Q\to 0.$$
   Hence $F  \otimes_\zed Q=0$, for all  algebraically closed fields   $F$ and  $Q$ is finitely generated so $Q=0$, i.e.,  $\Ann_{\zed\Sym(n)}(M)=I(\Delta)^\dagger$. 
   
   \bs

\it Step 3. \sl  Completion of proof.

\bs\rm

\q We may suppose  that $\Gamma$ is co-saturated and that  $R=F$, an algebraically closed field. 

\rm


\q For $x,y\in \Sym(r)$ and $\lambda\in \Par(r)$ we have 
\begin{align*} \psi(x[\Sym(\lambda)]^\dagger y)&=\sum_{\sigma\in \Sym(\lambda)} \sgn(\sigma) x c_{u,u\sigma} y=\sum_{\sigma\in \Sym(r)}\sgn(\sigma) c_{u,u x\sigma y}\\
&=\sum_{\sigma\in \Sym(r)}\sgn(\sigma) c_{u y^{-1},u x\sigma }=(u y^{-1}: ux)^\lambda
\end{align*}

\q But every bideterminant $(S:T)$, for $S,T\in \Tab(\lambda)$,  of left and right weight $\omega$, has the form  
$(u y^{-1}: ux)^\lambda$, for some $x,y\in \Sym(r)$. Hence ${}^\omega \cf(\tbw^\lambda E)^\omega  \subseteq \psi(I_F(\Delta)^\dagger)$, for $\lambda\in \pi^*$, by Lemma 3.8.    By  (2) above we have ${}^\omega A(\pi)^\omega \subseteq \psi(I_F(\Delta)^\dagger)$ and hence $\psi(I_F(\Delta)^\dagger)={}^\omega A(\pi)^\omega$. Thus  the dimension of $I_F(\Delta)^\dagger$ is the same for all algebraically closed fields $F$.

\q However   $\dim_\seee  I_\seee(\Delta)^\dagger$ is the ideal which, as a left $\seee \Sym(n)$-module, is a direct sum of copies of $\Sp(\lambda)_\seee$, $\lambda \in \Delta^*$,  and this is the annihilator of $\bigoplus_{\lambda\in \Gamma} M_\seee(\lambda)$.  Putting $M=\bigoplus_{\lambda\in \Gamma} M(\lambda)_\zed$, we have 
$$\dim_\seee I_\seee(\Delta)^\dagger = \dim \Ann_{\seee \Sym(n)}(M_\seee).$$
  But this is also the $F$-dimension of $\Ann_{F\Sym(n)}(M_F)$ and \\
   $I_F(\Delta)^\dagger\subseteq \Ann_{F\Sym(n)}(M_F)$ so we must have 
  $\Ann_{F\Sym(n)}(M_F)=I_F(\Delta)^\dagger$, as required. 

   \bs\bs

\bf Generators and Relations

\rm
\bs

   \q Let $R$ be a commutative ring and let $\Gamma$ be a subset of $\Par(n)$ such that $\hat \Gamma$ is saturated.  We shall give a presentation of the image ${\rm Im}(\rho)$ of  the representation $\rho:R\Sym(n)\to \End_{R\Sym(n)}(M)$, afforded by a Young permutation module $M$ over $R$ of type $\Gamma$.  This is done in \cite[Proposition 5.1]{BDMCanBases}  for the special cases of the  Young permutation modules   considered in Examples 2.11 and 2.13 above.

\q   Let $\tilde A_R$ be the free $R$-algebra on generators $\tilde e_\sigma$, $\sigma\in \Sym(n)$.   Let $\Delta$ a subset of $\Par(n)$ and let $H_R(\Delta)$ be the ideal of $\tilde A_R$ generated by the elements:
  \medskip

    (1) $\tilde e_\sigma \tilde e_{\tau} - \tilde e_{\sigma\tau}$, for all $\sigma,\tau\in \Sym(n)$;  and
  
  (2) $\sum_{\sigma\in \Sym(\lambda)} \sgn(\sigma) \tilde e_\sigma$, for all $\lambda\in \Delta$.

 \q  We set $Q_R(\Delta)=\tilde A_R/ H_R(\Delta)$ and  $e_\sigma=\tilde e_\sigma + H_R(\Delta)$, $\sigma\in \Sym(n)$.

  \begin{proposition} Let $\Gamma$ be a subset of $\Par(n)$ such that $\hat\Gamma$ is co-saturated.  We put $\Delta=(\Par(n)\backslash {\hat \Gamma})^*$.  Let $M$ be Young permutation module for $R\Sym(n)$ of type $\Gamma$. Let $\rho:R\Sym(n)\to \End_R(M)$ be the representation afforded by $M$.  Then we have an $R$-algebra isomorphism $f: Q_R(\Delta)\to {\rm Im} (\rho)$ taking $e_\sigma$ to $\rho(\sigma)$, $\sigma\in \Sym(n)$.
  \end{proposition}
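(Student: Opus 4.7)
\ The plan is to identify $Q_R(\Delta)$ with $R\Sym(n)/I_R(\Delta)^\dagger$ and then to invoke Theorem 3.5 to identify this quotient with ${\rm Im}(\rho)$.

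First I set up the map $f$.  By the universal property of the free $R$-algebra, the assignment $\tilde e_\sigma\mapsto \rho(\sigma)$ extends to a unital $R$-algebra homomorphism $\tilde f:\tilde A_R\to {\rm Im}(\rho)$, necessarily surjective.  Relations (1) hold in ${\rm Im}(\rho)$ because $\rho$ is a group homomorphism; in particular $\tilde f(\tilde e_1)=\rho(1)=\id_M$.  Relations (2) hold because Theorem 3.5 gives $I_R(\Delta)^\dagger\subseteq\Ann_{R\Sym(n)}(M)$, so $\rho([\Sym(\lambda)]^\dagger)=\sum_{\sigma\in\Sym(\lambda)}\sgn(\sigma)\rho(\sigma)=0$ for every $\lambda\in\Delta$.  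Hence $\tilde f$ factors through $Q_R(\Delta)$, yielding an $R$-algebra homomorphism $f:Q_R(\Delta)\to {\rm Im}(\rho)$ with $f(e_\sigma)=\rho(\sigma)$, and $f$ is surjective because the $\rho(\sigma)$ span ${\rm Im}(\rho)$.

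To obtain a map in the opposite direction, I observe that relations (1) show that the generators $e_\sigma\in Q_R(\Delta)$ satisfy the group multiplication of $\Sym(n)$: $e_\sigma e_\tau=e_{\sigma\tau}$, so $e_1$ is idempotent and acts as a two-sided identity on every $e_\sigma$, hence (as the $e_\sigma$ generate the algebra) on all of $Q_R(\Delta)$; thus $e_1$ coincides with the unit of $Q_R(\Delta)$.  Consequently $\sigma\mapsto e_\sigma$ extends to an $R$-algebra homomorphism $\pi:R\Sym(n)\to Q_R(\Delta)$.  Relations (2) force $\pi$ to kill $[\Sym(\lambda)]^\dagger=\sum_{\sigma\in\Sym(\lambda)}\sgn(\sigma)\sigma$ for every $\lambda\in\Delta$, and hence to kill the two-sided ideal these elements generate, which is precisely $I_R(\Delta)^\dagger$ (since ${}^\dagger$ is an algebra anti-automorphism).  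Thus $\pi$ descends to a surjective $R$-algebra homomorphism $\bar\pi:R\Sym(n)/I_R(\Delta)^\dagger\to Q_R(\Delta)$.

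Finally I combine the two.  The composition $f\circ\bar\pi:R\Sym(n)/I_R(\Delta)^\dagger\to{\rm Im}(\rho)$ sends $\sigma+I_R(\Delta)^\dagger\mapsto\rho(\sigma)$, and by Theorem 3.5 together with the first isomorphism theorem applied to $\rho$, this composition is an isomorphism.  Since $f$ and $\bar\pi$ are both surjective with bijective composite, each is itself bijective, giving the required isomorphism $f$.  All of the mathematical substance is supplied by Theorem 3.5; the only subtlety is the bookkeeping around the presentation, and in particular the observation that relations (1) together with the fact that the $e_\sigma$ generate the algebra force $e_1$ to be the unit of $Q_R(\Delta)$, so that $\pi$ is well-defined.
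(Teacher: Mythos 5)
Your proof is correct and follows essentially the same route as the paper: both construct $f$ from the universal property of the free algebra, use Theorem 3.5 to see that the relations (2) are killed by $\rho$, and then deduce injectivity from the identification of $Q_R(\Delta)$ with $R\Sym(n)/I_R(\Delta)^\dagger$ together with $\Ker(\rho)=I_R(\Delta)^\dagger$. The only cosmetic difference is that you package injectivity as a two-out-of-three argument on the surjections $\bar\pi$ and $f$, whereas the paper chases an element $\sum_\sigma a_\sigma e_\sigma$ of $\Ker(f)$ back through the isomorphism $\tilde A_R/J\cong R\Sym(n)$; your explicit observation that relations (1) force $e_1$ to be the unit of $Q_R(\Delta)$ makes that (shared, implicit) identification a little more transparent.
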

  
  \begin{proof} Let $\tilde f: \tilde A_R \to {\rm Im}(\rho)$ be the $R$-algebra homomorphism taking $\tilde e_\sigma$ to $\rho(\sigma)$, $\sigma\in \Sym(n)$.   Then 
  $$\tilde f( \tilde e_\sigma \tilde e_\tau  -\tilde e_{\sigma\tau})=\rho(\sigma)\rho(\tau)-\rho(\sigma\tau)=0$$
 for $\sigma,\tau\in \Sym(n)$. Also for $\lambda\in \Delta$, we have 
 $$[\Sym(\lambda)]^\dagger=\sum_{\sigma\in \Sym(\lambda)} \sgn(\sigma)\sigma\in I_R(\Delta)^\dagger$$
 and, by Theorem 3.5, $I_R(\Delta)^\dagger=\Ker(\rho)$ 
so that 
 $$\tilde f(\sum_{\sigma\in \Sym(\lambda)} \sgn(\sigma) \tilde e_\sigma)=\sum_{\sigma\in \Sym(\lambda)} \sgn(\sigma) \rho(\sigma)=\rho(\sum_{\sigma\in \Sym(\lambda)} \sgn(\sigma)\sigma)=0.$$
 
 \q Thus $\tilde f$ induces an $R$-algebra homomorphism $f:Q_R(\Delta)\to {\rm Im}(\rho)$ satisfying $f(e_\sigma)=\rho(\sigma)$, $\sigma\in \Sym(n)$. 
 
 \q Certainly $f$ is surjective.  We now check injectivity. Let $J$ be ideal of $\tilde A_R$ generated by all $\tilde e_\sigma \tilde e_\tau -\tilde e_{\sigma\tau}$.  Then the $R$-algebra surjection  $\tilde A_R\to R\Sym(n)$, taking $\tilde e_\sigma$ to $\sigma$, induces an isomorphism $\phi:\tilde A_R/J\to R\Sym(n)$, and $\phi^{-1}(I_R(\Delta)^\dagger)= H_R(\Delta)/J$. 
 
 \q Suppose $\sum_{\sigma\in \Sym(n)} a_\sigma e_\sigma$ (with $a_\sigma\in R$,  $\sigma\in \Sym(n)$) is in the kernel of $f$.  Then $\sum_{\sigma\in \Sym(n)} a_\sigma \sigma \in \Ker(\rho)=I_R(\Delta)^\dagger$. 
Thus \\
 $\phi(\sum_{\sigma\in \Sym(n)} a_\sigma \tilde e_\sigma + J)\in I_R(\Delta)^\dagger$, i.e., $\sum_{\sigma\in \Sym(n)} a_\sigma \tilde e_\sigma + J \in H_R(\Delta)/J$  and hence $\sum_{\sigma\in \Sym(n)} a_\sigma \tilde e_\sigma \in H_R(\Delta)$ and so $\sum_{\sigma\in \Sym(n)} a_\sigma e_\sigma=0$.  Thus $f$ is injective and hence an isomorphism.
\end{proof}

\bs\bs\bs\bs


\end{document}